\DeclarePairedDelimiter{\abs}{\lvert}{\rvert}
\DeclarePairedDelimiter{\norm}{\lVert}{\rVert}
\newcommand{\numberset}{\mathbb}
\newcommand{\N}{\numberset{N}} 
\newcommand{\R}{\numberset{R}} 
\renewcommand{\phi}{\varphi} 
\renewcommand{\chi}{\mathcal{X}} 
\newcommand{\Hst}{\tilde{H}^s}
\renewcommand{\epsilon}{\varepsilon}
\newcommand{\weak}{\rightharpoonup}
\newcommand{\weakstar}{\rightharpoonup^*}
\newtheorem{theorem}{Theorem}
\newtheorem{prop}[theorem]{Proposition}
\newtheorem{definition}[theorem]{Definition}
\newtheorem{lemma}[theorem]{Lemma}
\newtheorem{remark}[theorem]{Remark}
\newenvironment{system}%
{\left\lbrace\begin{aligned}}%
{\end{aligned}\right.}
\begin{document}

\title{On the obstacle problem for fractional semilinear wave equations}
\author{
	M. Bonafini\thanks{Institut f\"ur Informatik, Georg-August-Universit\"at G\"ottingen, Germany, bonafini@cs.uni-goettingen.de}
	,
	V.P.C. Le\thanks{Dipartimento di Matematica, Universit\`a di Trento, Italy, e-mail: vanphucuong.le@unitn.it}
	,
	M. Novaga\thanks{Dipartimento di Matematica, Universit\`a di Pisa, Italy, e-mail: matteo.novaga@unipi.it}
	,
	G. Orlandi\thanks{Dipartimento di Informatica, Universit\`a di Verona, Italy, e-mail: giandomenico.orlandi@univr.it}}
\date{\today}

\maketitle
\begin{abstract}
	We prove existence of weak solutions to the obstacle problem  for semilinear wave equations (including the fractional case) by using a suitable approximating scheme in the spirit of minimizing movements. This extends the results in \cite{bonafini2019variational}, where the linear case was treated. In addition, we deduce some compactness properties of concentration sets (e.g. moving interfaces) when dealing with
	singular limits of certain nonlinear wave equations.
\end{abstract}

\section{Introduction}
Semilinear wave equations have been considered extensively in the mathematical literature with many dedicated contributions (see for example \cite{Schatzman78, Maruo85, Ta94, SeTi, Neu, bellettini2010time, jerrard2011defects, del2018interface} and references therein). Our main motivation is to study certain nonlinear wave equations (possibly non-local) giving rise to interfaces (or defects) evolving by curvature such as minimal surfaces in Minkowski space: for instance, consider the class of equations
\begin{equation}\label{eq:semi1}
	\epsilon^{2}(u_{tt}-\Delta u) + \nabla_{u} W(u) = 0
\end{equation}
for $u \colon [0,\infty) \times \R^d \to \R^{m}$, where $W$ is a balanced double-well potential, $m\geq 1$, and $\epsilon>0$ is a small parameter (see for example \cite{Neu, bellettini2010time, jerrard2011defects, SmailyJerrad, del2018interface}). This case is the hyperbolic version of the stationary Allen--Cahn equation where the defects are Euclidean minimal surfaces and the parabolic Ginzburg--Landau where defects evolve according to motion by mean curvature (see for instance \cite{Modica, Tom, Giando} and references therein).

Obstacle problems in the elliptic and  parabolic setting have attracted a lot of attention including both local and non-local operators (see for example \cite{Si, CaPeSh, CaFi13, NoOk15, BaFiRo18} and references therein). In the hyperbolic scenario, we would like to mention works by Schatzman and collaborators (see for example  \cite{ Schatzman78, Schatzman80, Schatzman81, PaoliSchatzman02I}) and more recently, a work by Kikuchi dealing with the vibrating strings with an obstacle in the $1$-dimensional case by using a time semidiscrete method (see \cite{Ki09}). Notice that similar time semidiscrete methods have also been used to treat hyperbolic free boundary problems (see \cite{GiSv09, Yoshiho}). By using the same approach as in \cite{Ki09}, the obstacle problem for the fractional wave equation has been investigated in \cite{bonafini2019variational}, in which the existence of suitably defined weak solutions is proved.

In this paper, following  \cite{bonafini2019variational}, we implement a semidiscrete in time approximation scheme in order to prove existence of solutions to hyperbolic PDEs with possibly specific additional conditions. The scheme is closely related to  the concept of minimizing movements introduced by De Giorgi, and it is also elsewhere known as the discrete Morse semiflow approach or Rothe's method \cite{Ro30}. Our main focus is to prove the existence of weak solutions to the following PDEs (including also the obstacle case):
\begin{equation}\label{eq:semi2}
	\begin{system}
		& u_{tt} + (-\Delta)^s u +\nabla_{u} W(u)= 0    &\quad&\text{in } (0,T) \times \Omega                                      , \\
		& u(t,x) = 0                                	&\quad&\text{in } [0,T] \times (\R^d \setminus \Omega)                     , \\
		& u(0,x) = u_0(x)                           	&\quad&\text{in } \Omega                                                   , \\
		& u_t(0,x) = v_0(x)                         	&\quad&\text{in } \Omega                                                   , \\
	\end{system}
\end{equation}
for $\Omega \subset \R^d$ an open bounded domain with Lipschitz boundary and $W$ a continuous potential with Lipschitz continuous derivative.  For $s>0$ the operator $(-\Delta)^s$ stands for the fractional $s$-Laplacian. We prove a classical energy bound for the approximating trajectories in Proposition \ref{prop:keyestimate} and rely upon it to prove existence of a suitably defined weak solution of $\eqref{eq:semi2}$ in the obstacle-free case (Theorem $\ref{thm:main1}$) and in the obstacle case (Theorem $\ref{thm:main2}$). The approximation scheme allows us to deal with a variety of situations, including non-local fractional semilinear wave equations, and is valid in any dimension. This gives also some compactness results for concentration sets in the singular limit of $\eqref{eq:semi1}$.

The paper is organized as follows: in Section $\ref{sec:fractional}$ we briefly review some properties of the fractional Sobolev spaces and fractional Laplace operator so as to fix notations. In Section $\ref{sec:freesemiwaves}$ we introduce the approximating scheme and apply it to  fractional semilinear wave equations by means of an appropriate variational problem, prove existence result Theorem $\ref{thm:main1}$ in the obstacle-free case, and the conservative property of the solutions, namely Proposition $\ref{Conservative}$. In proposition $\ref{singularlimits}$ we prove compactness properties for the concentration sets in the singular limit of $\eqref{eq:semi1}$. In Section $\ref{sec:semiobstacle}$ we adapt the scheme to deal with the obstacle problem for fractional semilinear wave equations, and prove Theorem $\ref{thm:main2}$. Eventually, in Section $\ref{sec:numerics}$ we present an example implementing a case related to moving interfaces in a relativistic setting.
\section*{Acknowledgements}
The authors are partially supported by GNAMPA-INdAM. The first author gratefully acknowledges the support the Emmy Noether programme of the DFG, project number 403056140. We thank the anonymous referee for pointing out remarks that improved the presentation of the paper.

\section{Preliminaries}\label{sec:fractional}

Let us fix $s \geq 0$ and $m \geq 1$. Following \cite{DiNezzaPalatucciValdinoci12}, we introduce fractional Sobolev spaces and the fractional Laplacian through Fourier transform. Consider the Schwartz space $\mathcal{S}$ of rapidly decaying $C^\infty$ functions, namely $\mathcal{S}(\R^d; \,  \R^{m})$. For any $u \in \mathcal{S}(\R^d;\, \R^{m})$ denote by
\[
\mathcal{F}u(\xi) = \frac{1}{(2\pi)^{d/2}} \int_{\R^d} \textup{e}^{-\textup{i}\xi\cdot x}u(x) \,dx
\]
the Fourier transform of $u$. The fractional Laplacian operator $(-\Delta)^s \colon \mathcal{S}(\R^d; \, \R^{m}) \to L^2(\R^d; \, \R^{m})$ can then be defined, up to constants, as
\[
(-\Delta)^s u = \mathcal{F}^{-1}(|\xi|^{2s}\mathcal{F}u) \quad \text{for all }\xi \in \R^d.
\]
Given $u, v \in L^2(\R^d;\, \R^{m})$, we consider the bilinear form
\[
[u, v]_{s} = \int_{\R^{d}} (-\Delta)^{s/2}u(x) \cdot  (-\Delta)^{s/2}v(x) \, dx
\]
and the corresponding semi-norm $[u]_s = \sqrt{[u,u]_s} = ||(-\Delta)^{s/2}u||_{L^2(\R^d;   \, \R^{m})}$. Given the semi-norm $[\cdot]_s$, we define the fractional Sobolev space of order $s$ as
\[
H^s(\R^d) = \left\{ u \in L^2(\R^d ;\, \R^{m}) \,:\, \int_{\R^d} (1+|\xi|^{2s})|\mathcal{F}u(\xi)|^2\,d\xi < +\infty \right\}
\]
equipped with the norm $||u||_s = (||u||_{L^2(\R^d)}^2 + [u]_s^2 )^{1/2}$.

Fix now $\Omega \subset \R^d$ to be an open bounded set with Lipschitz boundary and define
\[
\tilde{H}^s(\Omega) = \{ u \in H^s(\R^d;\R^{m}) \,:\, u = 0 \text{ a.e. in } \R^d\setminus \Omega \},
\]
endowed with the $||\cdot||_s$ norm, and its dual $H^{-s}(\Omega) := (\Hst(\Omega))^*$. One can prove, see e.g. \cite{mclean2000strongly}, that $\tilde{H}^s(\Omega)$ corresponds to the closure of $C^\infty_c(\Omega)$ with respect to the $||\cdot||_s$ norm.

\medskip
\section{Weak solutions for the fractional semilinear wave equations}\label{sec:freesemiwaves}
We prove in this section existence of weak solutions for the fractional semilinear wave equation. The proof, as in \cite{bonafini2019variational}, is based on a constructive time-discrete variational scheme whose main ideas date back to \cite{Ro30} and which has since then been adapted to many instances of parabolic and hyperbolic equations.

Let $\Omega \subset \R^d$ be an open bounded domain with Lipschitz boundary. For $u = u(t,x):(0, T)\times \R^{d}\to \R^{m}$, let us consider the system
\begin{equation}\label{eq:freesemiwaves}
	\begin{system}
		& u_{tt} + (-\Delta)^s u +\nabla_{u} W(u)= 0    &\quad&\text{in } (0,T) \times \Omega                                   	\\
		& u(t,x) = 0                                	&\quad&\text{in } [0,T] \times (\R^d \setminus \Omega)                      \\
		& u(0,x) = u_0(x)                           	&\quad&\text{in } \Omega                                                    \\
		& u_t(0,x) = v_0(x)                         	&\quad&\text{in } \Omega                                                    \\
	\end{system}
\end{equation}
with initial data $u_0 \in \Hst(\Omega)$ and $v_0 \in L^2(\Omega):=L^2(\Omega;\, \R^{m})$ (we conventionally intend that $v_{0}=0 \mbox{ in } \R^{d} \setminus \Omega$), and a non-negative potential $W \in C^1(\R^{m};\, \R)$ having Lipschitz continuous derivative with Lipschitz constant $K>0$, i.e.,
\begin{equation}\label{conditionW}
	|\nabla W(x)-\nabla W(y)|\leq K |x-y| \quad \text{ for any $x, y \in \R^{m}$.}
\end{equation}
As we are dealing with non-local operators, the boundary condition is imposed on the whole complement of $\Omega$.
We define a weak solution of \eqref{eq:freesemiwaves} as follows:
\begin{definition}\label{def:weak}
	Let $T > 0$. We say $u = u(t,x)$ is a weak solution of \eqref{eq:freesemiwaves} in $(0,T)$ if
	\begin{enumerate}
		\item
		$
		u \in L^\infty(0,T; \Hst(\Omega)) \cap W^{1,\infty}(0,T;L^2(\Omega))$ and $u_{tt} \in L^\infty(0,T;H^{-s}(\Omega)),
		$
		\item for all $\phi \in L^{1}(0,T;\Hst(\Omega))$
		\begin{equation}\label{eq:eqweak}
			\int_{0}^T \langle u_{tt}(t), \phi(t) \rangle dt + \int_{0}^T [ u(t), \phi(t) ]_{s} \, dt +\int_{0}^T \int_\Omega \nabla_{u} W(u(t)) \cdot \phi(t) \,dxdt = 0
		\end{equation}
		with
		\begin{equation}\label{eq:u0free}
			u(0,x)=u_{0} \quad \text{ and } \quad u_{t}(0,x)=v_{0}.
		\end{equation}
	\end{enumerate}
	The energy of $u$ is defined as
	\[
	E(u(t)) = \frac12 ||u_{t}(t)||^{2}_{L^{2}(\Omega)}+\frac12 [u(t)]_{s}^2+||W(u(t))||_{L^{1}(\Omega)}, \quad t \in [0,T].
	\]
\end{definition}
\begin{remark}\label{absolutecontinuous}
In case $u_{t} \in L^{\infty}(0, T; \tilde{H}^{s}(\Omega))$, we observe that the following energy norms 
\begin{equation}
\begin{aligned}
\frac{1}{2}||u_{t}(\cdot)||^{2}_{L^{2}(\Omega)} \,:\,[0, T]&\to [0, \infty)\\
t&\mapsto \frac{1}{2} ||u_{t}(t)||^{2}_{L^{2}(\Omega)}
\end{aligned}
\end{equation}
\begin{equation}
\begin{aligned}
\frac{1}{2}[u(\cdot)]^{2}_{s} \,:\,[0, T]&\to [0, \infty)\\
t&\mapsto \frac{1}{2}[u(t)]^{2}_{s}
\end{aligned}
\end{equation}
\begin{equation}
\begin{aligned}
||W(u(\cdot))||_{L^{1}(\Omega)} \,:\,[0, T]&\to [0, \infty)\\
t&\mapsto ||W(u(t))||_{L^{1}(\Omega)}
\end{aligned}
\end{equation}
are absolutely continuous. Moreover, for a.e $t\in (0, T)$ one has:
\begin{equation}
\begin{aligned}
\frac{1}{2}\frac{d||u_{t}(t)||^{2}_{L^{2}(\Omega)}}{dt}&=<u_{tt}(t),u_{t}(t)>,\, \frac{1}{2}\frac{d[u(t)]^{2}_{s}}{dt}=[u(t),u_{t}(t)]_{s},\\
\mbox{and }\, \frac{d||W(u(t))||_{L^{1}(\Omega)}}{dt}&=\int_{\Omega}\nabla_{u}W(u(t))\cdot u_{t}(t)dt.
\end{aligned}
\end{equation}
We refer the reader to $\cite{L.C.Evans}$ for these facts.
\end{remark}
\medskip

\noindent This section is devoted to the proof of the following theorem.
\begin{theorem}\label{thm:main1}\hspace{0.5cm}
\begin{itemize}
	\item[(i)]\label{poin1theorem2} There exists a weak solution of the fractional semilinear wave equation \eqref{eq:freesemiwaves} such that it satisfies the energy inequality:
	\begin{equation}
	E(u(t))\leq E(u(0))
	\end{equation}
	for any $t\in [0, T]$.
	\item[(ii)]\label{poin2theorem2} if $u_{0}\in \tilde{H}^{2s}(\Omega)$ and $v_{0}\in \tilde{H}^{s}(\Omega)$, then there exists a solution $u$ of the equation \eqref{eq:freesemiwaves} such that $u\in  W^{1,\infty}(0, T; \tilde{H}^{s}(\Omega)), u_{t}\in  W^{1,\infty}(0, T; L^{2}(\Omega))$. Moreover, for any $t\in [0, T]$
	\begin{equation}
	E(u(t))=E(u(0)),
	\end{equation}
	i.e the energy of $u$ is conserved during the evolution.
    \item[(iii)]\label{poin3theorem2} The equation $\eqref{eq:freesemiwaves}$ has unique solution in the class:\\ 
	$X=\lbrace \, u \, | \, \mbox{u is a weak solution of } \eqref{eq:freesemiwaves}, \, u_{t}\in L^{\infty}(0, T; \tilde{H}^{s}(\Omega)) \rbrace$ in the sense that if $v, w\in X$, then 
	for each $t\in [0, T]$
	$$v(t)=w(t)\mbox{ in } \tilde{H}^{s}(\Omega).$$ In particular the solution found in point $(ii)$, since it belongs to $X$, it is unique.
\end{itemize}
\end{theorem}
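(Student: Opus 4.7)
My plan is to implement the Rothe/minimizing-movements scheme foreshadowed in the introduction. Fix $\tau = T/N$, set $u^0 := u_0$ and $u^{-1} := u_0 - \tau v_0$ (encoding the initial velocity), and define inductively $u^k \in \tilde H^s(\Omega)$ as a minimizer of
\[
F^k(u) = \frac{1}{2\tau^2}\|u - 2u^{k-1} + u^{k-2}\|_{L^2(\Omega)}^2 + \tfrac12 [u]_s^2 + \int_\Omega W(u)\,dx.
\]
Existence of a minimizer follows from the direct method (coercivity in $\tilde H^s(\Omega)$, weak lower semicontinuity of the quadratic parts, continuity of $W \ge 0$ together with Fatou), and the Euler--Lagrange equation is the discrete analogue of \eqref{eq:freesemiwaves}. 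I would then invoke Proposition \ref{prop:keyestimate} to obtain, uniformly in $\tau$, control of $\max_k[u^k]_s$, $\max_k\|(u^k - u^{k-1})/\tau\|_{L^2}$, and $\max_k\|W(u^k)\|_{L^1}$: the Lipschitz assumption \eqref{conditionW} is precisely what lets one absorb the potential contribution via a discrete Gronwall.

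For (i), let $u_\tau$ and $\bar u_\tau$ denote the piecewise linear and piecewise constant interpolants. The above bounds give $u_\tau \weakstar u$ in $L^\infty(0,T;\tilde H^s(\Omega))$, $(u_\tau)_t \weakstar u_t$ in $L^\infty(0,T;L^2(\Omega))$, and the discrete $u_{tt}$ bounded in $L^\infty(0,T;H^{-s}(\Omega))$. Aubin--Lions, via the compact embedding $\tilde H^s(\Omega)\hookrightarrow L^2(\Omega)$, upgrades $u_\tau \to u$ strongly in $L^2$; combined with Lipschitz continuity of $\nabla W$, this identifies the nonlinear term in the limit. Pairing the discrete Euler--Lagrange equation with $\phi \in L^1(0,T;\tilde H^s(\Omega))$ and sending $\tau \to 0$ produces \eqref{eq:eqweak}, while the initial conditions \eqref{eq:u0free} follow from the construction and weak continuity at $t=0$. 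The energy inequality is then inherited by lower semicontinuity of each term of $E$ combined with the discrete energy bound coming from Proposition \ref{prop:keyestimate}.

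For (ii), the extra regularity of the data unlocks a second-order discrete estimate: subtracting the Euler--Lagrange equations at two consecutive steps, dividing by $\tau$, and testing with $v^k - v^{k-1}$ (where $v^k := (u^k - u^{k-1})/\tau$) yields telescoping identities for $\|v^k - v^{k-1}\|_{L^2}^2/\tau^2$ and for $[v^k]_s^2$; the nonlinear term is controlled via $|\nabla W(u^k) - \nabla W(u^{k-1})| \le K\tau|v^k|$ and a discrete Gronwall. The hypothesis $u_0 \in \tilde H^{2s}(\Omega)$ ensures that the discrete initial acceleration $(u^1 - 2u^0 + u^{-1})/\tau^2$ is bounded in $L^2$, and $v_0 \in \tilde H^s(\Omega)$ ensures $[v^0]_s < \infty$, so the loop closes uniformly in $\tau$. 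In the limit one obtains $u_t \in L^\infty(0,T;\tilde H^s(\Omega))$ and $u_{tt} \in L^\infty(0,T;L^2(\Omega))$; this permits choosing $\phi = u_t$ in \eqref{eq:eqweak}, and by Remark \ref{absolutecontinuous} each term becomes the time derivative of one component of $E$, so $\tfrac{d}{dt}E(u(t)) = 0$, giving conservation.

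Part (iii) is a standard energy-uniqueness argument. Given $v,w\in X$, set $z := v-w$; then $z$ satisfies the weak formulation with source $\nabla W(v)-\nabla W(w)$, and since $z_t \in L^\infty(0,T;\tilde H^s(\Omega))$, the choice $\phi(\cdot) = z_t(\cdot)\mathbf{1}_{[0,t]}(\cdot)$ is admissible. Remark \ref{absolutecontinuous} converts the quadratic terms to $\tfrac12\|z_t(t)\|_{L^2}^2 + \tfrac12[z(t)]_s^2$ (using $z(0) = z_t(0) = 0$), while \eqref{conditionW} bounds the nonlinear contribution by $K\int_0^t \|z\|_{L^2}\|z_t\|_{L^2}\,dt'$; Gronwall then forces $z \equiv 0$. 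The principal technical obstacle I anticipate is the closed-form discrete second-order estimate in (ii): simultaneously controlling the fractional bilinear form and the Lipschitz nonlinearity when testing with $v^k - v^{k-1}$ uniformly in $\tau$ is delicate and relies crucially on the compatibility of $u_0 \in \tilde H^{2s}(\Omega)$ and $v_0 \in \tilde H^s(\Omega)$.
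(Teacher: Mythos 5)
Your plan follows the paper's proof essentially step for step: the same Rothe scheme with $u^{-1}=u_0-\tau v_0$, the key energy estimate plus discrete Gronwall for (i), the second-order discrete estimate obtained by differencing consecutive Euler--Lagrange equations and testing with $v^k-v^{k-1}$ for (ii), the choice $\phi=u_t\,\mathbf{1}_{[t_1,t_2]}$ together with the absolute-continuity facts of Remark \ref{absolutecontinuous} to get conservation, and the standard energy/Gronwall argument for uniqueness in (iii) (where, to close the Gronwall loop, you should invoke the fractional \Poincare{} inequality to absorb $\|z\|_{L^2}^2$ into $[z]_s^2$, as the paper does via Proposition \ref{poincareinequality}).

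There is one concrete gap, in the initial step of (ii). You claim that $u_0\in\tilde H^{2s}(\Omega)$ alone ensures the discrete initial acceleration $a^1=(u^1-2u^0+u^{-1})/\tau^2$ is bounded in $L^2$. But testing the first Euler--Lagrange equation with $a^1$ produces (after the telescoping manipulation) the term $[u^{-1},a^1]_s$, which one can only estimate by $[u^{-1}]_{2s}\,\|a^1\|_{L^2(\Omega)}$; this requires $u^{-1}$ to be bounded in $\tilde H^{2s}(\Omega)$ uniformly in $\tau$. With your initialization $u^{-1}=u_0-\tau v_0$ and $v_0$ only in $\tilde H^{s}(\Omega)$, the quantity $[u^{-1}]_{2s}$ need not even be finite. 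The paper repairs exactly this point (Proposition \ref{Conservative} and Lemma \ref{uniformoninitialsteps}) by replacing $v_0$ in the initialization with a sequence $v_0^n\in\tilde H^{s}(\Omega)$ converging to $v_0$ in $\tilde H^{s}(\Omega)$ and chosen so that $\|u_0-\tau_n v_0^n\|_{\tilde H^{2s}(\Omega)}\le C$ uniformly in $n$ (e.g.\ mollified velocities with $\tau_n\|v_0^n\|_{\tilde H^{2s}}$ controlled); one then checks that this modification does not affect the key estimate or the convergence propositions. Without this device your "loop" does not close, since the very first term $\|a^1\|_{L^2}^2+[v^1]_s^2$ of the second-order Gronwall iteration is not under control.
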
 
The proof relies on an extension of the approximating scheme already used in \cite{bonafini2019variational} in the linear case, where now one has to deal with the additional contribution of the (possibly non convex) potential term (the proof would simplify in case of a convex potential, as for example in \cite{Ta94}).

\subsection{Approximating scheme}\label{approximating scheme}
For $n\in \N$, set $\tau_n= T/n$ and define $t_i^n = i\tau_n$, $0\leq i \leq n$. Let $u_{-1}^n = u_0 - \tau_n v_0$, $u_0^n=u_0$ and for every $i\geq1$ let
\begin{equation}\label{eq:scheme}
	u_i^n \in \arg \min_{u \in \Hst(\Omega)} J_i^n(u) = \arg \min_{u \in \Hst(\Omega)} \left[ \int_\Omega \frac{\abs{u-2u_{i-1}^n+u_{i-2}^n}^2}{2\tau_n^2}\,dx + \frac12 [u]_s^2 +\int_{\Omega}W(u)dx \right].
\end{equation}
We can readily see, using the direct method of the calculus of variations, that each $J_i^n$ admits a minimizer in $\Hst(\Omega)$ so that $u_i^n$ is indeed well defined (notice that we are not working under uniqueness assumptions, thus we may have to choose between multiple minimizers). For any fixed $i \in \{1,\dots,n\}$, by minimality we have
\[
\frac{d}{d\varepsilon} J_i^n(u_i^n+\varepsilon \phi) |_{\varepsilon=0}=0 \quad \text{for every } \phi \in \Hst(\Omega)
\]
or, equivalently,
\begin{equation}\label{eq:ELnoobstacle}
	\int_{\Omega} (\frac{u_i^n-2u_{i-1}^n+u_{i-2}^n}{\tau_n^2})\cdot \phi \,dx + [u_i^n , \phi]_{s}+\int_{\Omega}\nabla_{u} W(u^{n}_{i}) \cdot \phi dx = 0 \quad \text{for every } \phi \in \Hst(\Omega).
\end{equation}
We define the piecewise constant and piecewise linear interpolations over $[-\tau_n,T]$ as follows:
\begin{itemize}
	\item piecewise constant interpolant
	\begin{equation}\label{eq:uhbar}
		\bar{u}^n(t,x) =
		\begin{system}
			& u_{-1}^n(x) &\quad &t=-\tau_n             \\
			& u_i^n(x) &\quad &t \in (t_{i-1}^n,t_i^n], \\
		\end{system}
	\end{equation}
	\item piecewise linear interpolant
	\begin{equation}\label{eq:uh}
		u^n(t,x) =
		\begin{system}
			& u_{-1}^n(x)                                              					&\quad &t=-\tau_n                   \\
			& \frac{t-t_{i-1}^n}{\tau_n}u_i^n(x) + \frac{t_i^n-t}{\tau_n}u_{i-1}^n(x) 	&\quad &t \in (t_{i-1}^n,t_i^n].  	\\
		\end{system}
	\end{equation}
\end{itemize}
At the same time, upon defining $v_i^n = (u_i^n-u_{i-1}^n)/\tau_n$, $0\leq i \leq n$, let $\bar{v}^{n}$ be the piecewise constant interpolation and $v^{n}$ be the piecewise linear interpolation over $[0,T]$ of the family $\{v_i^n\}_{i=0}^n$, defined similarly to \eqref{eq:uhbar}, \eqref{eq:uh}.

From \eqref{eq:ELnoobstacle}, an integration over $[0,T]$ provides
\[
\int_{0}^T \int_\Omega \left( \frac{u^n_t(t) - u^n_t(t-\tau_n)}{\tau_n} \right) \cdot \phi(t) \,dxdt + \int_{0}^T [ \bar{u}^n(t), \phi(t) ]_{s} \, dt+\int_{0}^T \int_\Omega \nabla_{u} W(\bar{u}^{n}(t))\cdot \phi(t) \,dxdt= 0
\]
for all $\phi \in L^1(0,T;\Hst(\Omega))$, which is equivalent to
\begin{equation}\label{eq:ELn}
	\int_{0}^T \int_\Omega v_t^n(t) \cdot \phi(t) \,dxdt + \int_{0}^T [ \bar{u}^n(t), \phi(t) ]_{s} \, dt+\int_{0}^T \int_\Omega \nabla_{u} W(\bar{u}^{n}(t)) \cdot \phi(t) \,dxdt = 0.
\end{equation}
The strategy in proving Theorem \ref{thm:main1} is then to consider $\eqref{eq:ELn}$, pass to the limit as $n\to \infty$ and prove that $u^{n}$ and $\bar{u}^{n}$ converge to a weak solution of $\eqref{eq:freesemiwaves}$. In order to do so, we need the following energy estimate.

\begin{prop}[Key estimate]\label{prop:keyestimate}
	The approximate solutions $\bar{u}^n$ and $u^n$ satisfy
	\[
	\frac12 \norm{ u_t^n(t) }_{ L^2(\Omega) }^{ 2 } + \frac12 [ \bar{u}^n(t) ]_{s}^{ 2 } + ||W(\bar{u}^n(t))||_{L^1(\Omega)} 
	\leq E(u(0)) + C\tau_n
	\]
	for all $t \in [0,T]$, with $C = C(E(u(0)), K, T)$ a constant independent of $n$.
\end{prop}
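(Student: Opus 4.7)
The plan is to derive a discrete one-step energy inequality. The key step is to test the Euler--Lagrange identity \eqref{eq:ELnoobstacle} for $u_i^n$ with the admissible increment $\phi = u_i^n - u_{i-1}^n \in \Hst(\Omega)$, then use the polarisation identity $2(a-b)\cdot a = |a|^2 - |b|^2 + |a-b|^2$ on both the inertial and the fractional terms. Writing $u_i^n - u_{i-1}^n = \tau_n v_i^n$, these two contributions become respectively
\[
\tfrac12 \bigl(\|v_i^n\|_{L^2(\Omega)}^2 - \|v_{i-1}^n\|_{L^2(\Omega)}^2 + \|v_i^n - v_{i-1}^n\|_{L^2(\Omega)}^2\bigr)
\]
and
\[
\tfrac12 \bigl([u_i^n]_s^2 - [u_{i-1}^n]_s^2 + [u_i^n - u_{i-1}^n]_s^2\bigr).
\]

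To handle the potential contribution $\int_\Omega \nabla_u W(u_i^n) \cdot (u_i^n - u_{i-1}^n)\,dx$ I would Taylor-expand $W$ along the segment joining $u_{i-1}^n$ and $u_i^n$ and use the Lipschitz bound \eqref{conditionW} on $\nabla W$: this produces
\[
\int_\Omega \nabla_u W(u_i^n)\cdot (u_i^n - u_{i-1}^n)\,dx = \|W(u_i^n)\|_{L^1(\Omega)} - \|W(u_{i-1}^n)\|_{L^1(\Omega)} + R_i,
\]
with a quadratic remainder satisfying $|R_i| \le \tfrac{K}{2}\|u_i^n - u_{i-1}^n\|_{L^2(\Omega)}^2 = \tfrac{K\tau_n^2}{2}\|v_i^n\|_{L^2(\Omega)}^2$. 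Dropping the two nonnegative squared-increment terms and setting $E_i := \tfrac12\|v_i^n\|_{L^2(\Omega)}^2 + \tfrac12[u_i^n]_s^2 + \|W(u_i^n)\|_{L^1(\Omega)}$, the resulting inequality reduces to the discrete Gr\"onwall step
\[
(1 - K\tau_n^2)\,E_i \le E_{i-1}.
\]

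Iterating this inequality for $i \le n$ yields $E_i \le (1 - K\tau_n^2)^{-n} E_0$, and for $n$ large enough the elementary bound $(1-K\tau_n^2)^{-n} \le \exp(2Kn\tau_n^2) = \exp(2KT\tau_n) \le 1 + C\tau_n$, with $C=C(K,T,E(u(0)))$, closes the estimate; the initial datum identifies $E_0 = E(u(0))$ thanks to the choice $u_{-1}^n = u_0 - \tau_n v_0$, so that $v_0^n = v_0$. The Proposition then follows because, on each sub-interval $(t_{i-1}^n, t_i^n]$, the interpolants satisfy $u_t^n(t) = v_i^n$ and $\bar u^n(t) = u_i^n$, so the left-hand side of the claim coincides exactly with $E_i$. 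The main obstacle is the non-convex potential: testing minimality directly against the competitor $u = u_{i-1}^n$ would yield the wrong kinetic term $\tfrac12\|v_i^n - v_{i-1}^n\|_{L^2(\Omega)}^2$ in place of $\tfrac12\|v_i^n\|_{L^2(\Omega)}^2$, whereas the Euler--Lagrange test above produces the correct quantity at the price of a quadratic $O(\tau_n^2)$ remainder that the discrete Gr\"onwall absorbs uniformly in $n$.
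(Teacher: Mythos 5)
Your proof is correct and follows essentially the same route as the paper's: test the Euler--Lagrange identity with the discrete increment $\pm(u_i^n-u_{i-1}^n)$, telescope the kinetic and fractional energies, and absorb the $O(\tau_n^2)\,\|v_i^n\|_{L^2(\Omega)}^2$ remainder coming from the non-convex potential by a discrete Gronwall argument. The only differences are cosmetic and harmless: you estimate the potential remainder by a direct Taylor expansion with the Lipschitz bound on $\nabla W$ rather than via the time-interpolants $\bar u^n$ and $u^n$, and you run a single multiplicative iteration $(1-K\tau_n^2)E_i\le E_{i-1}$ instead of the paper's two-step version, which first extracts a uniform bound on $\|v_i^n\|_{L^2(\Omega)}^2$ and then substitutes it back into the summed energy inequality.
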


\begin{proof}
	For fixed $i \in \{1,\dots,n\}$, we consider equation \eqref{eq:ELnoobstacle} with test function $\phi = u_{i-1}^n-u_i^n = -\tau_n v_i^n$ to obtain
	\begin{equation}\label{eq:mainestimate1}
		\begin{aligned}
			0 &= \int_\Omega (v_{i-1}^n-v_{i}^n)\cdot v_i^n\,dx + [u_i^n, u_{i-1}^n - u_i^n]_{s}+\int_{\Omega}\nabla_{u} W(u^{n}_{i})\cdot (u^{n}_{i-1}-u^{n}_{i})dx \\
			&\leq \frac{1}{2} \int_\Omega \left[|v_{i-1}^n|^2 - |v_{i}^n|^2 \right]\, dx + \frac{1}{2} ( [u_{i-1}^n]_{s}^{ 2 } - [u_i^n]_{s}^{ 2 } )-\int_{t_{i-1}^n}^{t_i^n}\int_{\Omega}\nabla_{u} W(u^{n}_{i})\cdot v_i^n\,dxdt
		\end{aligned}
	\end{equation}
	where we used the standard inequality $2a\cdot (b-a) \leq |b|^2-|a|^2$, for $a,b \in \R^{m}$. Let us focus on the last term in the previous expression: for any $t \in (t_{i-1}^n,t_i^n]$, we write
	\[
	\begin{aligned}
	&-\int_{t_{i-1}^n}^{t_i^n}\int_{\Omega}\nabla_{u} W(u^{n}_{i})\cdot v_i^n\,dxdt = -\int_{t_{i-1}^n}^{t_i^n}\int_{\Omega}\nabla_{u} W(\bar{u}^n(t))\cdot \bar{v}^n(t) \,dxdt \\
	&= -\int_{t_{i-1}^n}^{t_i^n}\int_{\Omega}\nabla_{u} W({u}^n(t))\cdot \bar{v}^n(t) \,dxdt -\int_{t_{i-1}^n}^{t_i^n}\int_{\Omega}(\nabla_{u} W(\bar{u}^n(t))-\nabla_{u} W({u}^n(t))\cdot \bar{v}^n(t) \,dxdt
	\end{aligned}
	\]
	We recognize in the first integral a derivative, so that
	\[
	\begin{aligned}
	&-\int_{t_{i-1}^n}^{t_i^n}\int_{\Omega}\nabla_{u} W({u}^n(t))\cdot \bar{v}^n(t) \,dxdt = -\int_{t_{i-1}^n}^{t_i^n}\int_{\Omega}\frac{d}{dt}W(u^n(t)) \,dxdt = \int_\Omega \left[ W(u^n_{i-1}) - W(u_i^n)\right]\,dx
	\end{aligned}
	\]
	On the other hand, since $\bar{u}^n$ and $u^n$ are just different interpolations of the same data and $\nabla_{u} W$ is Lipschitz continuous by assumption, the second integral can be estimated as
	\[
	\begin{aligned}
	&-\int_{t_{i-1}^n}^{t_i^n}\int_{\Omega}(\nabla_{u} W(\bar{u}^n(t))-\nabla_{u} W({u}^n(t))\cdot \bar{v}^n(t) \,dxdt \leq K \int_{t_{i-1}^n}^{t_i^n}\int_{\Omega}|\bar{u}^n(t)-{u}^n(t)||\bar{v}^n(t)| \,dxdt \\
	& = K \int_{t_{i-1}^n}^{t_i^n}\int_{\Omega}|u_i^n-(u_i^n+(t-t_i^n)v_i^n)|\cdot|v_i^n| \,dxdt = \frac{\tau_n^2}{2}K \int_\Omega \left| v_i^n \right|^2 \,dx
	\end{aligned}
	\]
	Hence, inequality \eqref{eq:mainestimate1} leads to
	\[
	\begin{aligned}
	0 &\leq \frac{1}{2} \left(||v_{i-1}^n||_{L^2(\Omega)}^2 - ||v_{i}^n||_{L^2(\Omega)}^2\right) + \frac{1}{2} ( [u_{i-1}^n]_{s}^{ 2 } - [u_i^n]_{s}^{ 2 } ) \\
	&+ \int_\Omega\left[W(u^n_{i-1}) - W(u_i^n))\right]\,dx+ \frac{\tau_n^2}{2}K ||v_{i}^n||_{L^2(\Omega)}^2
	\end{aligned}
	\]
	Taking the sum for $i = 1,\dots,k$, with $1 \leq k \leq n$, we get
	\begin{equation}\label{eq:step1}
		\begin{aligned}
			E_k^n &:= \frac12 ||v_{k}^n||_{L^2(\Omega)}^2 + \frac12 [u_k^n]_{s}^{ 2 } + \int_\Omega W(u_k^n)\,dx \\
			&\leq \frac12 ||v_{0}||_{L^2(\Omega)}^2 + \frac12 [u_{0}]_{s}^{ 2 } + \int_\Omega W(u_0)\,dx + \frac{\tau_n^2}{2}K \sum_{i=1}^{k} ||v_{i}^n||_{L^2(\Omega)}^2
		\end{aligned}
	\end{equation}
	In particular, we have
	\[
	||v_{k}^n||_{L^2(\Omega)}^2 \leq 2E(u(0)) + \tau_n^2 K \sum_{i=1}^{k} ||v_{i}^n||_{L^2(\Omega)}^2
	\]
	for any $k = 1,\dots,n$. For $n$ large enough so that $(1-\tau_n^2K) > 1/2$, we write
	\begin{equation}
	||v_{k}^n||_{L^2(\Omega)}^2 \leq \frac{1}{(1-\tau_n^2 K)} \left( 2E(u(0)) + \tau_n^2 K \sum_{i=1}^{k-1} ||v_{i}^n||_{L^2(\Omega)}^2\right)
	\end{equation}
	Then, in view of the discrete Gronwall's inequality (cf. Proposition $\ref{GI}$), we obtain that
	\begin{equation}\label{eq:vbound}
		||v_{i}^n||_{L^2(\Omega)}^2 \leq \bar{C} \quad \text{for every } i = 1,\dots, n
	\end{equation}
	with $\bar{C} = \bar{C}(E(u(0)), K)$. Taking into account \eqref{eq:vbound} into \eqref{eq:step1} we finally get
	\[
	\begin{aligned}
	E_k^n = \frac12 ||v_{k}^n||_{L^2(\Omega)}^2 + \frac12 [u_k^n]_{s}^{ 2 } + \int_\Omega W(u_k^n)\,dx &\leq E(u(0)) + \frac{\tau_n^2}{2}K \sum_{i=1}^{k} \bar{C} \leq E(u(0)) + \left(\frac{T}{2}K \bar{C}\right) \tau_n
	\end{aligned}
	\]
	for every $k = 1,\dots,n$, which is the sought for conclusion.
\end{proof}
Thanks to the energy bound of Proposition \ref{prop:keyestimate} we can now provide a suitable uniform bound on $\nabla_{u} W(\bar{u}^{n})$, which is one of the main ingredients to be able to pass to the limit in \eqref{eq:ELn}.
\begin{prop}\label{unBW'}
	Let $\bar{u}^{n}$ be the piecewise constant interpolant constructed in $\eqref{eq:uhbar}$. Then, 
	$\nabla_{u} W(\bar{u}^{n}(t))$ is bounded in $L^2(\Omega)$ uniformly in $t$ and $n$.
\end{prop}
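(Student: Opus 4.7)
The plan is to reduce the claim to an $L^2$ bound on $\bar{u}^n(t)$ via the Lipschitz hypothesis on $\nabla_u W$, and then to reduce that in turn to a bound on the fractional seminorm $[\bar{u}^n(t)]_s$, which is exactly what Proposition \ref{prop:keyestimate} delivers uniformly.

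First, using \eqref{conditionW} with $y=0$, one has the pointwise bound $|\nabla_u W(p)| \leq |\nabla_u W(0)| + K|p|$ for every $p \in \R^m$. Applied to $p = \bar{u}^n(t,x)$ and integrated over $\Omega$, this yields
\[
\|\nabla_u W(\bar{u}^n(t))\|_{L^2(\Omega)}^2 \leq 2|\nabla_u W(0)|^2 |\Omega| + 2K^2 \|\bar{u}^n(t)\|_{L^2(\Omega)}^2,
\]
so it suffices to bound $\|\bar{u}^n(t)\|_{L^2(\Omega)}$ uniformly in $t$ and $n$.

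Second, since $\bar{u}^n(t) \in \tilde{H}^s(\Omega)$ with $\Omega$ bounded, a fractional Poincar\'e inequality (standard for $\tilde{H}^s(\Omega)$, see e.g.\ \cite{DiNezzaPalatucciValdinoci12}) provides a constant $C_\Omega = C_\Omega(\Omega,s,d)$ such that
\[
\|\bar{u}^n(t)\|_{L^2(\Omega)} \leq C_\Omega\, [\bar{u}^n(t)]_s.
\]
Third, Proposition \ref{prop:keyestimate} gives $[\bar{u}^n(t)]_s^2 \leq 2E(u(0)) + 2C\tau_n$, which is bounded by $2E(u(0)) + 2CT$ uniformly in $t \in [0,T]$ and in $n$ (large enough). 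Chaining the three estimates concludes the proof.

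There is no serious obstacle here: the argument is essentially a one-line consequence of the Lipschitz property combined with the energy bound, modulo invoking a fractional Poincar\'e inequality to convert the seminorm control from Proposition \ref{prop:keyestimate} into an $L^2$ control. The only minor point to keep in mind is that $\nabla_u W$ need not vanish at the origin, which is why the term $|\nabla_u W(0)|^2|\Omega|$ appears and contributes to the final constant.
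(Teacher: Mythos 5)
Your argument is correct, and the reduction to an $L^2$ bound on $\bar{u}^n(t)$ via the pointwise estimate $|\nabla_u W(p)|\leq|\nabla_u W(0)|+K|p|$ is exactly the second half of the paper's proof (cf.\ \eqref{eq:L2Wp}). Where you diverge is in how you obtain the uniform $L^2$ bound on $\bar{u}^n(t)$ itself: you pass through the seminorm control $[\bar{u}^n(t)]_s^2\leq 2E(u(0))+2C\tau_n$ from Proposition \ref{prop:keyestimate} and then invoke a fractional \Poincare{} inequality on $\tilde{H}^s(\Omega)$ — which is legitimate, and indeed the paper states and proves exactly such an inequality in the Appendix (Proposition \ref{poincareinequality}, via Heisenberg--Pauli--Weyl), albeit only for the uniqueness argument. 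The paper instead exploits the \emph{velocity} part of the key estimate: writing $u^n(t)-u^n(0)=\int_0^t u^n_t$ and using Jensen and Fubini, it gets $\|u^n(t_2)-u^n(t_1)\|_{L^2(\Omega)}\leq C|t_2-t_1|$, hence a uniform $L^2$ bound on $u^n$ and then on $\bar{u}^n$ since $\sup_t\|u^n(t)-\bar{u}^n(t)\|_{L^2}\to 0$. The trade-off: your route is shorter but needs the \Poincare{} constant (hence genuinely uses $s>0$ and the zero extension outside the bounded set $\Omega$), whereas the paper's route uses only the $L^2$ velocity bound plus the initial datum $u_0\in L^2$, so it is insensitive to $s$ and transfers verbatim to the obstacle case. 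Either way the conclusion and the constants' dependence on $E(u(0))$, $K$, $T$, $\Omega$ are the same, so your proof is acceptable.
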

\begin{proof}
	We first observe that $\bar{u}^{n}$ is bounded in $L^{2}(\Omega)$ uniformly in $t$ and $n$. Indeed, one has
	\begin{equation}\label{eq:mainestimate2}
		\begin{aligned}
			||u^{n}(t_{2},.)-u^{n}(t_{1},.)||_{L^{2}(\Omega)}^2&=\int_{\Omega} \left|\int_{t_{1}}^{t_{2}}u^{n}_{t}(t,x)dt\right|^{2}dx \leq (t_{2}-t_{1})\int_{\Omega}\int_{t_{1}}^{t_{2}}|u^{n}_{t}(t,x)|^{2}dtdx\\
			&=(t_{2}-t_{1})\int_{t_{1}}^{t_{2}}\int_{\Omega}|u^{n}_{t}(t,x)|^{2}dxdt \leq C(t_{2}-t_{1})^{2},
		\end{aligned}
	\end{equation}
	for any $t_{1}<t_{2}$ in $[0, T]$, where we made use of Jensen's inequality, Fubini's theorem and the uniform bound on $u^{n}_{t}$ in $L^2(\Omega)$ provided by Proposition $\ref{prop:keyestimate}$. That implies that $u^{n}$ is bounded in $L^{2}(\Omega)$ uniformly in $t$ and $n$, and so is $\bar{u}^{n}$ since $\displaystyle{\lim_{n \to \infty}\sup_{t \in [0,T]} ||u^n(t,x)-\bar{u}^n(t,x)} ||^{2}_{L^{2}(\Omega)}=0$. For every fixed $t \in [0,T]$, this uniform $L^2$-bound, combined with the Lipschitz continuity of $\nabla_{u} W$ and with boundedness of $\Omega$, provides
	\begin{equation}\label{eq:L2Wp}
		\int_\Omega |\nabla_{u} W(\bar{u}^{n}(t))|^2 \, dx \leq C_1\int_\Omega (|\bar{u}^{n}(t)| + 1)^2\,dx \leq C_2
	\end{equation}
	uniformly in $t$ and $n$.
\end{proof}

We are now in the position to prove the convergence of $u^{n}$, $\bar{u}^{n}$, $W(\bar{u}^{n})$ and $\nabla_{u} W(\bar{u}^{n})$.

\begin{prop}[Convergence of $u^n$ and $v^n$]\label{prop:convun}
	There exist a subsequence of steps $\tau_n \to 0$ and a function $u \in L^\infty(0,T;\Hst(\Omega)) \cap W^{1,\infty}(0,T;L^2(\Omega))$, with  $u_{tt} \in L^\infty(0,T;H^{-s}(\Omega))$, such that
	\begin{itemize}
		\item[(i)] $u^n \to u$ in $C^0([0,T];L^2(\Omega))$,
		\item[(ii)] $u_t^n \rightharpoonup^* u_t$ in  $L^\infty(0,T;L^2(\Omega))$,
		\item[(iii)] $u^n(t) \rightharpoonup u(t)$ in $\Hst(\Omega)$ for any $t \in [0,T]$,
		\item[(iv)] $v^n \to u_t$ in $C^0([0,T];H^{-s}(\Omega))$,
		\item[(v)] $v^n_t \rightharpoonup^* u_{tt}$ in $L^\infty(0,T;H^{-s}(\Omega))$.
	\end{itemize}
\end{prop}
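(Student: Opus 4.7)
The plan is to extract limits from the uniform bounds provided by Propositions \ref{prop:keyestimate} and \ref{unBW'}, first through weak-$*$ compactness (Banach--Alaoglu) and then upgrading to the strong convergences in (i) and (iv) via a generalized Arzel\`a--Ascoli argument, exploiting the compact Sobolev embedding $\Hst(\Omega) \hookrightarrow L^2(\Omega)$ on the bounded Lipschitz domain $\Omega$ and, by duality, the compact embedding $L^2(\Omega) \hookrightarrow H^{-s}(\Omega)$.

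First I would collect the following uniform-in-$n$ bounds. Proposition \ref{prop:keyestimate} gives that $u^n_t = \bar v^n$ is bounded in $L^\infty(0,T;L^2(\Omega))$ and that $\bar u^n$ is bounded in $L^\infty(0,T;\Hst(\Omega))$; the same $\Hst$-bound transfers to $u^n$ since the seminorm $[\,\cdot\,]_s$ is convex and $u^n(t)$ is a convex combination of $u_{i-1}^n$ and $u_i^n$. Testing \eqref{eq:ELnoobstacle} against $\phi \in \Hst(\Omega)$ and combining the $\Hst$-bound on $\bar u^n$ with the $L^2$-bound on $\nabla_u W(\bar u^n)$ from Proposition \ref{unBW'}, one further obtains that $v^n_t$ is bounded in $L^\infty(0,T;H^{-s}(\Omega))$.

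Having these bounds, Banach--Alaoglu produces, along a diagonally extracted subsequence, $u^n \weakstar u$ in $L^\infty(0,T;\Hst(\Omega))$ and $u^n_t \weakstar w$ in $L^\infty(0,T;L^2(\Omega))$; testing against tensorized fields $\eta(t)\psi(x)$ with $\eta \in C^\infty_c(0,T)$ and integrating by parts in time identifies $w=u_t$, giving (ii) together with $u \in W^{1,\infty}(0,T;L^2(\Omega))$. For (i) I would apply a generalized Arzel\`a--Ascoli theorem in $C^0([0,T];L^2(\Omega))$: the family $\{u^n\}$ is equi-Lipschitz into $L^2(\Omega)$ (from the $L^\infty$-$L^2$ bound on $u^n_t$), and at each fixed $t$ the set $\{u^n(t)\}$ is relatively compact in $L^2(\Omega)$ (by uniform $\Hst$-boundedness and the compact embedding $\Hst(\Omega)\hookrightarrow L^2(\Omega)$). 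Statement (iii) is then automatic: for every $t$, boundedness of $u^n(t)$ in $\Hst(\Omega)$ guarantees a weak cluster point, which (i) forces to coincide with $u(t)$, so the whole subsequence converges weakly at every $t$.

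For (iv) I would repeat the Arzel\`a--Ascoli scheme with target $H^{-s}(\Omega)$: $v^n$ is equi-Lipschitz into $H^{-s}$ by the bound on $v^n_t$, and each $v^n(t)$ lies in a bounded subset of $L^2(\Omega)$, which embeds compactly into $H^{-s}(\Omega)$. This extracts $v^n \to z$ in $C^0([0,T];H^{-s}(\Omega))$, and the limit is identified as $u_t$ by observing that $u^n_t = \bar v^n$ together with
\[
\|\bar v^n - v^n\|_{L^\infty(0,T;H^{-s}(\Omega))} \leq \tau_n\, \|v^n_t\|_{L^\infty(0,T;H^{-s}(\Omega))} \longrightarrow 0,
\]
which, combined with $u^n_t \weakstar u_t$, forces $z=u_t$. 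Finally (v) follows from Banach--Alaoglu applied to $v^n_t$ in $L^\infty(0,T;H^{-s}(\Omega))$: the weak-$*$ limit equals $u_{tt}$ in the distributional sense (by passing to the limit in $\int \langle v^n_t,\phi\rangle\,dt = -\int \langle v^n,\phi_t\rangle\,dt$ using (iv)) and lies in $L^\infty(0,T;H^{-s}(\Omega))$ by lower semicontinuity of the norm. The main technical point I anticipate is bookkeeping a \emph{single} subsequence along which (i)--(v) hold simultaneously, and ensuring that (iii) is a genuinely pointwise (not merely a.e.) statement in $t\in[0,T]$; the strong $C^0([0,T];L^2(\Omega))$ convergence of (i) is exactly what delivers the latter.
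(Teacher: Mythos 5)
Your proposal is correct and follows essentially the same route as the paper: the uniform bounds from Propositions \ref{prop:keyestimate} and \ref{unBW'} plus the Euler--Lagrange relation give the $L^\infty(0,T;H^{-s}(\Omega))$ bound on $v^n_t$, and (i)--(v) are obtained exactly as in the paper via Ascoli--Arzel\`a/Banach--Alaoglu, with the same $\tau_n\,\|v^n_t\|_{L^\infty(0,T;H^{-s}(\Omega))}$ estimate used to identify the limit of $v^n$ with $u_t$.
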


\begin{proof}
	The existence of a limit function $u \in L^\infty(0,T;\Hst(\Omega)) \cap W^{1,\infty}(0,T;L^2(\Omega))$ and points $(i)$, $(ii)$ and $(iii)$ follow from Proposition \ref{prop:keyestimate} combined with Ascoli-Arzelà's Theorem (for details see, e.g., \cite[Proposition 6]{bonafini2019variational}).
	
	To prove $(iv)$ and $(v)$, we observe that from \eqref{eq:ELn}, with the aid of Proposition \ref{prop:keyestimate} and Proposition \ref{unBW'}, we have that $v_t^n(t)$ is bounded in $H^{-s}(\Omega)$ uniformly in $t$ and $n$. Combining this with the $L^2$-bound on the velocities $v_i^n$, we have 
	\begin{equation}\label{eq:vnbounds}
		v^n \text{ bounded in } L^\infty(0,T;L^2(\Omega)) \text{ and in } W^{1,\infty}(0,T;H^{-s}(\Omega))
	\end{equation}
	uniformly in $t$, $n$, and at the same time, for any given $\phi \in H^{s}(\Omega)$ and for all $0 \leq t_{1} < t_{2} \leq T$, we have
	\[
	\begin{aligned}
	\int_{\Omega}(v^{n}(t_{2})-v^{n}(t_{1}))\cdot \phi dx &=
	\int_{\Omega} \int_{t_{1}}^{t_{2}}v^{n}_{t}dt \cdot \phi dx = \int_{\Omega}\int_{t_{1}}^{t_{2}} v^{n}_{t}\cdot \phi dtdx = \int_{t_{1}}^{t_{2}}\int_{\Omega} v^{n}_{t}\cdot \phi dxdt \\
	&\leq \int_{t_{1}}^{t_{2}}||v^{n}_{t}||_{H^{-s}}||\phi||_{H^{s}}dt \leq C||\phi ||_{H^{s}}(t_{2}-t_{1}).
	\end{aligned}
	\]
	Thus, there exists $v \in W^{1,\infty}(0,T;H^{-s}(\Omega))$ such that
	\[
	v^n \to v \text{ in } C^{0}([0, T]; H^{-s}(\Omega)) \quad\text{and}\quad v_t^n \rightharpoonup^* v_t \text{ in } L^{\infty}(0,T;H^{-s}(\Omega)).
	\]
	Indeed, we have $v(t) = u_t(t)$ as elements of $L^2(\Omega)$ for a.e. $t \in [0,T]$: take $t \in (t_{i-1}^n,t_i^n]$ and $\phi \in \Hst(\Omega)$, we observe that $u_t^n(t) = v^n(t^n_i)$, so that
	\[
	\begin{aligned}
	\int_{\Omega} (u_t^n(t) - v^n(t))\cdot \phi\,dx &= \int_{\Omega} (v^n(t_i^n) - v^n(t))\cdot \phi\,dx = \int_{\Omega} \left(\int_{t}^{t^n_i} v_t^n(s)\,ds\right)\cdot \phi\,dx \\
	&\leq \tau_n ||v_t^n||_{L^\infty(0,T;H^{-s}(\Omega))} ||\phi||_{s}
	\end{aligned}
	\]
	which implies, for any $\psi(t,x) = \phi(x)\eta(t)$ with $\phi \in \Hst(\Omega)$ and $\eta \in C^1_0([0,T])$, that
	\[
	\begin{aligned}
	&\int_0^T \left[ \int_\Omega (u_t(t)-v(t))\cdot \phi\,dx\right]\eta(t) \,dt = \int_0^T\int_\Omega (u_t(t)-v(t))\cdot \psi \,dxdt \\
	&= \lim_{n\to\infty} \int_0^T\int_\Omega (u^n_t(t)-v^n(t))\cdot \psi \,dxdt = \lim_{n\to\infty} \int_0^T\left[\int_\Omega (u^n_t(t)-v^n(t))\cdot \phi\,dx\right]\eta(t)\,dt \\
	&\leq \lim_{n\to\infty} \tau_n T ||v_t^n||_{L^\infty(0,T;H^{-s}(\Omega))} ||\phi||_{s} ||\eta||_{\infty} = 0.
	\end{aligned}
	\]
	This implies
	\[
	\int_\Omega (u_t(t)-v(t))\cdot \phi\,dx = 0 \quad \text{ for all }\phi\in \Hst(\Omega) \text{ and a.e. } t \in [0,T],
	\]
	which yields $v(t) = u_t(t)$ for a.e. $t \in [0,T]$. Thus, $v_t = u_{tt}$ and
	$
	u_{tt} \in L^\infty(0,T;H^{-s}(\Omega)).
	$
\end{proof}

\begin{remark}\label{rem:utpw}
	From point $(iv)$ in Proposition \ref{prop:convun} we have that $v^n \to u_t$ in $C^0([0,T];H^{-s}(\Omega))$. At the same time, due to Proposition \ref{prop:keyestimate}, $v^n(t)$ is uniformly bounded in $L^2(\Omega)$. Thus, $v^n(t) \weak u_t(t)$ in $L^2(\Omega)$, which in turn provides
	\[
	u_t^n(t) \weak u_t(t) \text{ in } L^2(\Omega) \quad \text{for any }t \in [0,T].
	\] 
\end{remark}

\begin{prop}[Convergence of $\bar{u}^n$ and $W(\bar{u}^{n})$]\label{prop:convunbar}
	Let $u$ be the limit function obtained in Proposition \ref{prop:convun}. Then, up to a subsequence,
	\begin{itemize}
		\item[(i)] $\bar{u}^n \weakstar u$ in $L^\infty(0,T;\Hst(\Omega))$,
		\item[(ii)] $\bar{u}^n(t) \weak u(t)$ in $\Hst(\Omega)$ for any $t \in [0,T]$,
		\item[(iii)] $W(\bar{u}^{n}) \to W(u)$ in $C^{0}([0, T]; L^1(\Omega))$.
	\end{itemize}
\end{prop}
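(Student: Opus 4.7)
The plan hinges on a single basic observation: $\bar{u}^n$ and $u^n$ are two interpolations of the same nodal values, hence they differ by at most $\tau_n$ times a velocity. Precisely, for $t \in (t_{i-1}^n, t_i^n]$ one has $\bar{u}^n(t) - u^n(t) = (t_i^n - t)v_i^n$, and Proposition \ref{prop:keyestimate} gives
\[
\sup_{t \in [0,T]} \|\bar{u}^n(t) - u^n(t)\|_{L^2(\Omega)} \leq \tau_n \max_{1 \leq i \leq n} \|v_i^n\|_{L^2(\Omega)} \leq C \tau_n.
\]
Combined with the strong convergence $u^n \to u$ in $C^0([0,T]; L^2(\Omega))$ from Proposition \ref{prop:convun}(i), this upgrades to $\bar{u}^n \to u$ in $C^0([0,T]; L^2(\Omega))$. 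Together with the $\Hst$-bound provided by the key estimate, this single fact drives all three claims.

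For (i), I would first observe that Proposition \ref{prop:keyestimate} makes $\bar{u}^n$ bounded in $L^\infty(0,T;\Hst(\Omega))$. Since $\Hst(\Omega)$ is a separable reflexive Hilbert space, $L^\infty(0,T;\Hst(\Omega))$ is the dual of $L^1(0,T;H^{-s}(\Omega))$, and Banach--Alaoglu extracts a subsequence converging weakly-$*$ to some $w$. To identify $w$ with $u$, I would test against $\psi(t,x) = \eta(t)\phi(x)$ with $\eta \in C_c^\infty(0,T)$ and $\phi \in C_c^\infty(\Omega) \subset \Hst(\Omega)$: weak-$*$ convergence gives $\int_0^T \int_\Omega \bar{u}^n \psi \, dxdt \to \int_0^T \int_\Omega w \psi \, dxdt$, while the uniform $L^2$ convergence of the first paragraph yields the same limit with $u$ in place of $w$; the two must coincide. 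For (ii), at each fixed $t$ the bound $[\bar{u}^n(t)]_s \leq C$ ensures that $\{\bar{u}^n(t)\}$ is bounded in the Hilbert space $\Hst(\Omega)$, so any subsequence has a weakly convergent sub-subsequence; strong $L^2$-convergence forces the limit to equal $u(t)$, and uniqueness of the weak limit yields convergence of the whole sequence.

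For (iii), I would exploit the Lipschitz continuity of $\nabla W$. A mean value argument along the segment from $u(t,x)$ to $\bar{u}^n(t,x)$ gives the pointwise bound
\[
|W(\bar{u}^n(t,x)) - W(u(t,x))| \leq \bigl(|\nabla W(0)| + K(|\bar{u}^n(t,x)| + |u(t,x)|)\bigr)\,|\bar{u}^n(t,x) - u(t,x)|,
\]
and integrating over $\Omega$ with Cauchy--Schwarz leads to
\[
\|W(\bar{u}^n(t)) - W(u(t))\|_{L^1(\Omega)} \leq \bigl(|\nabla W(0)|\,|\Omega|^{1/2} + K(\|\bar{u}^n(t)\|_{L^2} + \|u(t)\|_{L^2})\bigr)\,\|\bar{u}^n(t) - u(t)\|_{L^2(\Omega)}.
\]
The parenthesized factor is uniformly bounded in $t$ and $n$ by the uniform $L^2$-bound on $\bar{u}^n$ already established within the proof of Proposition \ref{unBW'} (together with the corresponding bound on $u$, inherited as its $L^2$-limit), while the last factor vanishes uniformly in $t$ by the first paragraph. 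This delivers the desired $C^0([0,T]; L^1(\Omega))$ convergence. The only mildly delicate step is the identification of the weak-$*$ limit in (i), but this is routine once the uniform strong $L^2$-convergence of $\bar{u}^n$ to $u$ is in hand; I expect no real obstacle.
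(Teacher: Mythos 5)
Your proposal is correct and follows essentially the same route as the paper: the uniform bound $\sup_t\|\bar{u}^n(t)-u^n(t)\|_{L^2(\Omega)}\leq C\tau_n$ from the key estimate, Banach--Alaoglu plus identification of the limit for (i)--(ii), and the mean-value/Lipschitz-gradient growth bound $|W(x)-W(y)|\leq(C_1(|x|+|y|)+C_2)|x-y|$ with Cauchy--Schwarz for (iii). The only (harmless) difference is that you bound the interpolation gap by $\tau_n\max_i\|v_i^n\|_{L^2}$ directly, which is in fact slightly sharper than the paper's summed estimate.
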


\begin{proof} Regarding $(i)$ and $(ii)$ one can proceed as in \cite[Proposition 7]{bonafini2019variational}. By construction, taking into account Proposition \ref{prop:keyestimate}, we have
\begin{equation}\label{convergenceuun}
	\begin{aligned}
	&\sup_{t \in [0,T]} \int_\Omega \abs{u^n(t,x)-\bar{u}^n(t,x)}^2 \, dx = \sum_{i=1}^n \sup_{t \in [t_{i-1}^n,t_i^n]} (t-t_i^n)^2 \int_{\Omega} |v_i^n|^2\,dx \leq \tau_n^2 \sum_{i=1}^{n} ||v_i^n||_{L^2(\Omega)}^2 \leq C\tau_n
	\end{aligned}
\end{equation}
	which implies $\bar{u}^n \to u$ in $L^\infty(0,T;L^2(\Omega))$. Furthermore, again by Proposition \ref{prop:keyestimate}, $\bar{u}^n(t)$ is bounded in $\Hst(\Omega)$ uniformly in $t$ and $n$, so that we have $\bar{u}^n \weakstar u$ in $L^\infty(0,T;\Hst(\Omega))$. Thanks to point $(i)$ in Proposition \ref{prop:convun}, we also obtain pointwise convergence $\bar{u}^n(t) \weak u(t)$ in $\Hst(\Omega)$ for any $t \in [0,T]$, which is $(ii)$.
	
	For the convergence of $W(\bar{u}^{n})$, we first observe a following property of $W$:  there are positive constants $C_{1}, C_{2}$ such that
	\begin{equation}\label{growthW}
	|W(x)-W(y)|\leq (C_{1}(|x|+|y|)+C_{2})(|x-y|)
	\end{equation}
	for any $x, y \in \R^{m}$. Indeed, let $x, y\in \R^{m}$ be fixed, by the Mean Value Theorem there exists $c\in [x, y]$, here we denote $[x, y]$ the segment connecting $x$ and $y$ in $\R^{m}$, such that
	\begin{equation}
	W(x)-W(y)=\nabla W(c)\cdot (x-y).
	\end{equation}
	Thus, from the Lipshitz continuity of $\nabla W$ we deduce that
	\begin{equation}\label{eq:mainestimate2}
	\begin{aligned}
	|W(x)-W(y)|&\leq|\nabla W(c)||x-y|\\
	&\leq (C_{1}|c|+C_{2})|x-y|\\
	&\leq (C_{1}\max \lbrace |x|, |y| \rbrace+C_{2})|x-y|\\
	&\leq (C_{1}(|x|+|y|)+C_{2})|x-y|\\
	\end{aligned}
	\end{equation}
where $C_{1}, C_{2}$ are positive constants independent of $c, x, y$.\\
Then, let $t\in [0, T]$ we have
\begin{equation}\label{eq:mainestimate2}
\begin{aligned}
\int_{\Omega}|W(\bar{u}^{n} (t))-W(u(t))|dx&\leq \int_{\Omega} (C_{1}(|\bar{u}^{n}(t)|+|u(t)|+C_{2})|\bar{u}_{n}(t)-u(t)|dx\\
&\leq\int_{\Omega} (C_{1}|\bar{u}^{n}(t)+u(t)|+C_{2})|\bar{u}^{n}(t)-u(t)|dx\\
&\leq||(C_{1}|\bar{u}^{n}(t)+u(t)|+C_{2})||_{L^{2}(\Omega)}||\bar{u}^{n}(t)-u(t)||_{L^{2}(\Omega)}\\
&\leq C_{3}||\bar{u}^{n}(t)-u(t)||_{L^{2}(\Omega)},
\end{aligned}
\end{equation}
where $C_{3}$ is a constant independent of $t, n$ due to the boundedness of $\bar{u}^{n}, u^{n}$ in $L^{2}(\Omega)$ uniformly in $t, n$ and point $(i)$ in Proposition $\ref{prop:convun}$. In addition, once again from point $(i)$ in Proposition $5$ combined with $\eqref{convergenceuun}$, it implies that $\bar{u}^{n} \to u$ in $C^0([0,T];L^2(\Omega))$. So, we can conclude that $W(\bar{u}^{n}) \to W(u)$ in $C^{0}([0, T]; L^1(\Omega))$.
\end{proof}
\begin{prop}[Convergence of $\nabla_{u} W(\bar{u}^n)$]\label{prop:Wprimeconvunbar}
	Let $u$ be the limit function obtained in Proposition \ref{prop:convun}. Then, up to a subsequence, $\nabla_{u} W(\bar{u}^n) \weakstar \nabla_{u} W(u)$ in $L^\infty(0,T;H^{-s}(\Omega))$.
\end{prop}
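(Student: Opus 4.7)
The plan is to deduce the claimed weak-$*$ convergence from a stronger statement, namely strong convergence in $L^{\infty}(0,T;L^{2}(\Omega))$, which follows almost immediately from the Lipschitz assumption on $\nabla_{u} W$ combined with the strong $L^{2}$-convergence of $\bar{u}^{n}$ already established in Proposition \ref{prop:convunbar}.

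More precisely, I would first use the Lipschitz bound \eqref{conditionW} pointwise in $t$ to write
\[
\|\nabla_{u} W(\bar{u}^{n}(t)) - \nabla_{u} W(u(t))\|_{L^{2}(\Omega)} \leq K\,\|\bar{u}^{n}(t) - u(t)\|_{L^{2}(\Omega)}
\]
for every $t \in [0,T]$. Taking the supremum in $t$ and recalling from the proof of Proposition \ref{prop:convunbar} that $\bar{u}^{n} \to u$ in $C^{0}([0,T];L^{2}(\Omega))$ (this was established via \eqref{convergenceuun} together with Proposition \ref{prop:convun}$(i)$), one obtains
\[
\nabla_{u} W(\bar{u}^{n}) \longrightarrow \nabla_{u} W(u) \quad \text{strongly in } L^{\infty}(0,T;L^{2}(\Omega)).
\]

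To pass from this $L^{2}$-convergence to the target space $H^{-s}(\Omega)$, I would invoke the continuous embedding $\tilde{H}^{s}(\Omega) \hookrightarrow L^{2}(\Omega)$ (immediate since $s \geq 0$ and the $\|\cdot\|_{s}$-norm dominates the $L^{2}$-norm), which by duality yields $L^{2}(\Omega) \hookrightarrow H^{-s}(\Omega)$ continuously. Hence the strong $L^{\infty}(0,T;L^{2}(\Omega))$ convergence above promotes to strong convergence in $L^{\infty}(0,T;H^{-s}(\Omega))$, which is in particular weak-$*$ convergence, as claimed. Note that no further subsequence extraction is needed here; the phrase "up to a subsequence" in the statement refers to the subsequence already fixed in Proposition \ref{prop:convun}.

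There is essentially no obstacle in this proof: once the preceding propositions are in place, the Lipschitz hypothesis on $\nabla_{u} W$ does all the work, and the continuous embedding of $L^{2}$ into $H^{-s}$ is used only to match the functional setting in which the weak-$*$ limit is stated (this matters in the next step, where one wants to take the limit inside the trilinear expression \eqref{eq:ELn}, testing against $\phi \in L^{1}(0,T;\tilde{H}^{s}(\Omega))$).
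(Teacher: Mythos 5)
Your argument is correct, and it rests on the same key ingredient as the paper's proof: the Lipschitz continuity of $\nabla_{u}W$ transfers the already-established strong $L^{2}$-convergence of $\bar{u}^{n}$ to $\nabla_{u}W(\bar{u}^{n})$. The difference lies in how the convergence is upgraded to the stated weak-$*$ convergence. The paper only records strong convergence of $\nabla_{u}W(\bar{u}^{n})$ in $L^{2}((0,T)\times\Omega)$ and then combines it with the uniform bound of Proposition \ref{unBW'} to identify the weak-$*$ limit of a (sub)sequence in $L^{\infty}(0,T;H^{-s}(\Omega))$; this requires a weak-$*$ compactness extraction and an identification-of-the-limit step. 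You instead exploit the \emph{uniform-in-time} convergence $\bar{u}^{n}\to u$ in $C^{0}([0,T];L^{2}(\Omega))$ (which the paper itself establishes inside the proof of Proposition \ref{prop:convunbar}) to get strong convergence of $\nabla_{u}W(\bar{u}^{n})$ in $L^{\infty}(0,T;L^{2}(\Omega))$, and then push it through the continuous embedding $L^{2}(\Omega)\hookrightarrow H^{-s}(\Omega)$. This yields \emph{strong} convergence in $L^{\infty}(0,T;H^{-s}(\Omega))$, a strictly stronger conclusion that makes Proposition \ref{unBW'} and any further subsequence extraction unnecessary, and it suffices for the only use of this proposition, namely passing to the limit in the potential term of \eqref{eq:ELn} against $\phi\in L^{1}(0,T;\Hst(\Omega))$. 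Both routes are valid; yours is the tighter one.
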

\begin{proof}
	The same spirit of the analysis of the convergence $W(\bar{u}^{n})$ in Proposition $\ref{prop:convunbar}$, one can check that, up to a subsequence,
	\begin{equation}
		\nabla_{u} W(\bar{u}^{n})\to \nabla_{u} W(u)\mbox{ in } L^{2}((0,T)\times \Omega).
	\end{equation}
	From Proposition $\ref{unBW'}$, we observe that $\nabla_{u} W(\bar{u}^{n})$ is bounded in $H^{-s}(\Omega)$ uniformly in $t$ and $n$, this implies our conclusion.
\end{proof}

\subsection{Proof of Theorem \ref{thm:main1}}
\begin{proof}[{\bf Proof of Theorem \ref{thm:main1} $(i)$.}]

	Let $u$ be the cluster point obtained in Proposition \ref{prop:convun}, we shall prove that $u$ is a weak solution of $\eqref{eq:freesemiwaves}$. In fact, for each $n>0$, from \eqref{eq:ELn} one has
	\[
	\int_{0}^T \int_\Omega v_t^n(t)\cdot \phi(t) \,dxdt + \int_{0}^T [ \bar{u}^n(t), \phi(t) ]_{s} \, dt+\int_{0}^T \int_\Omega \nabla_{u} W(\bar{u}^{n}(t))\cdot \phi(t) \,dxdt  = 0
	\]
	for any $\phi \in L^1(0,T;\Hst(\Omega))$. Passing to the limit as $n \to \infty$, using Propositions \ref{prop:convun}, \ref{prop:convunbar}, \ref{prop:Wprimeconvunbar}, we immediately get
	\begin{equation}\label{weakequation}
		\int_{0}^T \langle u_{tt}(t), \phi(t) \rangle dt + \int_{0}^T [ u(t), \phi(t) ]_{s} \, dt+\int_{0}^T \int_\Omega \nabla_{u} W(u(t))\cdot \phi(t) \,dxdt = 0.
	\end{equation}
	The fact that $u(0) = u_0$ and $u_t(0) = v_0$ follows observing that $u^{n}(0)=u_{0}$ and $v^{n}(0)=v_{0}$ for all $n$ and that, thanks to Proposition $\ref{prop:convun}$, $u^{n}\to u$ in $C^{0}([0, T]; L^{2}(\Omega))$ and $v^{n}\to u_{t}$ in $C^{0}([0,T]; H^{-s}(\Omega))$. Finally, the verification of energy inequality is easily obtained by passing to the limit in energy estimate in Proposition $\ref{prop:keyestimate}$.
\end{proof}

In order to prove Theorem \ref{thm:main1} $(ii)$, i.e. energy conservation for the limiting solution $u$ under more regular initial data, we actually have to slightly modify the approximating scheme, as precised in the following
\begin{prop}\label{Conservative}
Let $u_{0}\in \tilde{H}^{2s}(\Omega), v_{0}\in \tilde{H}^{s}(\Omega)$, and set $u^{n}_{-1}=u_{0}-\tau_{n}v^{n}_{0}$  where $\lbrace v^{n}_{0} \rbrace_{n} \subset \tilde{H}^{s}(\Omega)$, $v^{n}_{0}\to v_{0}$ in $\tilde{H}^{s}(\Omega)$, and such that $||u_{0}-\tau_{n}v^{n}_{0}||_{\tilde{H}^{2s}(\Omega)}\leq C$ with $C$ independent of $n$. Then, let $u^{n}, \bar{u} ^{n}$ be approximate solutions of $\eqref{eq:freesemiwaves}$ satisfying the equation $\eqref{eq:ELn}$, and $u$ be a limiting solution, we have that  $u\in W^{1,\infty}(0, T; \tilde{H}^{s}(\Omega))$, $u_{t} \in W^{1,\infty}(0, T; L^{2}(\Omega))$. Moreover, for any $0\leq t_{1}<t_{2} \leq T$, the energy $E(u)$ satisfies
	\begin{equation}
		E(u(t_{1}))=E(u(t_{2})).
	\end{equation}
\end{prop}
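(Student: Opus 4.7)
The plan is to strengthen the discrete energy estimate of Proposition~\ref{prop:keyestimate} so that, under the additional regularity of the initial data, the limit solution satisfies $u_t \in L^\infty(0,T;\tilde{H}^s(\Omega))$ and $u_{tt} \in L^\infty(0,T;L^2(\Omega))$. Once this upgraded regularity is available, $u_t$ becomes an admissible test function in the weak formulation \eqref{eq:eqweak}, and energy conservation will follow by recognizing each term on the left-hand side as a total time derivative via Remark~\ref{absolutecontinuous}.

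The core technical step is a discrete time-differentiation of the Euler--Lagrange relation. For $i \geq 2$, I would subtract \eqref{eq:ELnoobstacle} at step $i-1$ from the one at step $i$ and test the resulting identity with $\phi = v_i^n - v_{i-1}^n = \tau_n w_i^n$, where $w_i^n := (v_i^n - v_{i-1}^n)/\tau_n$. The elementary inequality $2a\cdot(a-b) \geq |a|^2 - |b|^2$ handles the inertial and fractional terms, while the Lipschitz assumption \eqref{conditionW}, together with $u_i^n - u_{i-1}^n = \tau_n v_i^n$, controls the potential contribution. After dividing by $\tau_n$ and summing over $i = 2,\dots,k$, I expect an estimate of the form
\[
\|w_k^n\|_{L^2(\Omega)}^2 + [v_k^n]_s^2 \leq \|w_1^n\|_{L^2(\Omega)}^2 + [v_1^n]_s^2 + K\tau_n \sum_{i=2}^k \bigl(\|v_i^n\|_{L^2(\Omega)}^2 + \|w_i^n\|_{L^2(\Omega)}^2\bigr),
\]
so that Proposition~\ref{prop:keyestimate} together with the discrete Gronwall inequality produces uniform bounds on $\|w_k^n\|_{L^2(\Omega)}$ and $[v_k^n]_s$, provided the initial quantities $\|w_1^n\|_{L^2(\Omega)}$ and $[v_1^n]_s$ are uniformly bounded.

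Bounding these two initial quantities is the main obstacle, and it is precisely where the hypothesis $\|u_0 - \tau_n v_0^n\|_{\tilde{H}^{2s}(\Omega)} \leq C$ enters the picture. Using $u_1^n - 2u_0 + u_{-1}^n = \tau_n^2 w_1^n$, the Euler--Lagrange identity at $i=1$ can be rewritten as the elliptic-type identity
\[
\int_\Omega w_1^n \cdot \phi \, dx + \tau_n^2 [w_1^n,\phi]_s = -\int_\Omega \bigl((-\Delta)^s(2u_0 - u_{-1}^n) + \nabla_u W(u_1^n)\bigr)\cdot \phi \, dx
\]
for all $\phi \in \tilde{H}^s(\Omega)$. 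The right-hand side sits in $L^2(\Omega)$ uniformly in $n$: the first summand because $2u_0 - u_{-1}^n = u_0 + \tau_n v_0^n$ is uniformly bounded in $\tilde{H}^{2s}(\Omega)$ by assumption, and the second by Proposition~\ref{unBW'}. Testing with $\phi = w_1^n \in \tilde{H}^s(\Omega)$ then yields $\|w_1^n\|_{L^2(\Omega)}^2 + \tau_n^2 [w_1^n]_s^2 \leq C$, which gives both $\|w_1^n\|_{L^2(\Omega)} \leq C$ and, via $v_1^n = v_0^n + \tau_n w_1^n$ combined with the convergence $v_0^n \to v_0$ in $\tilde{H}^s(\Omega)$, the bound $[v_1^n]_s \leq C$.

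Passing to the limit as in Propositions~\ref{prop:convun}--\ref{prop:Wprimeconvunbar} with these upgraded bounds then delivers $u \in W^{1,\infty}(0,T;\tilde{H}^s(\Omega))$ and $u_t \in W^{1,\infty}(0,T;L^2(\Omega))$. Since $u_t \in L^\infty(0,T;\tilde{H}^s(\Omega)) \subset L^1(0,T;\tilde{H}^s(\Omega))$, one may insert $\phi = u_t\, \chi_{[t_1,t_2]}$ in \eqref{eq:eqweak} and invoke Remark~\ref{absolutecontinuous} to recognize
\[
\langle u_{tt},u_t\rangle + [u,u_t]_s + \int_\Omega \nabla_u W(u)\cdot u_t\,dx = \frac{d}{dt}\,E(u(t))
\]
for a.e.\ $t$, whence integration from $t_1$ to $t_2$ yields $E(u(t_2)) - E(u(t_1)) = 0$, as claimed.
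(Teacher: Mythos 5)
Your proposal is correct and follows essentially the same route as the paper: a discrete time-differentiation of the Euler--Lagrange relation tested with $v_i^n-v_{i-1}^n$ plus discrete Gronwall, an initial-step bound obtained from the $i=1$ Euler equation using the uniform $\tilde{H}^{2s}$ control on the data (the paper's Lemma~\ref{uniformoninitialsteps} tests with $a_1^n$ and bounds $\abs{[u_{-1}^n,a_1^n]_s}\leq [u_{-1}^n]_{2s}\norm{a_1^n}_{L^2}$, which is the same duality step as your elliptic reformulation), and finally testing the weak equation with $u_t\,\chi_{[t_1,t_2]}$ together with Remark~\ref{absolutecontinuous}. No gaps.
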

\begin{remark}
Observe that slightly changing the approximating scheme in the initialization step, by setting $u^{n}_{-1}=u_{0}-\tau_{n}v^{n}_{0}$, doesn't affect the properties of the approximate solutions, namely the same energy estimate as in Proposition $\ref{prop:keyestimate}$ holds true, and hence Proposition $\ref{prop:convun}$, $\ref{prop:convunbar}$, $\ref{prop:Wprimeconvunbar}$ remain valid.
\end{remark}
Proposition $\ref{Conservative}$ is a consequence of the following 
\begin{lemma}\label{uniformoninitialsteps}
Let $u_{0}\in \tilde{H}^{2s}(\Omega), v_{0}\in \tilde{H}^{s}(\Omega)$ and $u^{n}_{-1}=u_{0}-\tau_{n}v^{n}_{0}$ be as in Proposition $\ref{Conservative}$. 
Then, there exists a constant $C$ independent of $n$ such that:
	\begin{equation}
		\int_{\Omega}|\frac{u^{n}_{1}-2u^{n}_{0}+u^{n}_{-1}}{\tau^{2}_{n}}|^{2}dx+[\frac{u^{n}_{1}-u^{n}_{0}}{\tau_{n}}]^{2}_{s}\leq C.
	\end{equation}
\end{lemma}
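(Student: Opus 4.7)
The plan is to test the Euler--Lagrange equation \eqref{eq:ELnoobstacle} at the first step $i=1$ against the discrete acceleration itself, and exploit the added regularity hypothesis on $u^n_{-1}$ and $v^n_0$ to close an estimate that yields both quantities simultaneously. Concretely, since $u^n_1,u^n_0,u^n_{-1}$ all belong to $\tilde H^s(\Omega)$, the test function
$$\phi_n := \frac{u^n_1-2u^n_0+u^n_{-1}}{\tau_n^2} \in \tilde H^s(\Omega)$$
is admissible. Setting $v^n_1 = (u^n_1-u^n_0)/\tau_n$ and noting that $(u^n_0-u^n_{-1})/\tau_n = v^n_0$ by construction, one has $\phi_n = (v^n_1-v^n_0)/\tau_n$.

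The key algebraic step is to handle the fractional term by decomposing $u^n_1 = u^n_{-1}+\tau_n(v^n_1+v^n_0)$, which by bilinearity and symmetry of $[\cdot,\cdot]_s$ gives the clean identity
$$[u^n_1,\phi_n]_s = [u^n_{-1},\phi_n]_s + [v^n_1+v^n_0,\, v^n_1-v^n_0]_s = [u^n_{-1},\phi_n]_s + [v^n_1]_s^{2} - [v^n_0]_s^{2}.$$
The hypothesis $\|u^n_{-1}\|_{\tilde H^{2s}(\Omega)} \leq C$ is now used precisely to convert the cross term into an $L^2$-pairing: via Plancherel, $[u^n_{-1},\phi_n]_s = \int_\Omega (-\Delta)^s u^n_{-1}\cdot \phi_n\,dx$, and therefore
$$\bigl|[u^n_{-1},\phi_n]_s\bigr| \leq \|(-\Delta)^s u^n_{-1}\|_{L^2(\Omega)}\,\|\phi_n\|_{L^2(\Omega)} \leq C\,\|\phi_n\|_{L^2(\Omega)}.$$

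For the potential term I would invoke Proposition \ref{unBW'} (or argue directly from $u^n_1 = u_0+\tau_n v^n_1$, the uniform $L^2$-bound on $v^n_i$ from Proposition \ref{prop:keyestimate}, and the Lipschitz continuity of $\nabla_u W$) to get $\|\nabla_u W(u^n_1)\|_{L^2(\Omega)} \leq C$, hence
$$\Bigl|\int_\Omega \nabla_u W(u^n_1)\cdot \phi_n\,dx\Bigr| \leq C\,\|\phi_n\|_{L^2(\Omega)}.$$
Substituting everything into \eqref{eq:ELnoobstacle} tested against $\phi_n$ produces
$$\|\phi_n\|_{L^2(\Omega)}^{2} + [v^n_1]_s^{2} \leq [v^n_0]_s^{2} + C\,\|\phi_n\|_{L^2(\Omega)}.$$

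To conclude, I would apply Young's inequality to absorb $C\|\phi_n\|_{L^2}$ into $\tfrac12\|\phi_n\|_{L^2}^{2}+C'$, and use that $v^n_0\to v_0$ in $\tilde H^s(\Omega)$ to bound $[v^n_0]_s^{2}$ uniformly in $n$. The result is
$$\tfrac12\|\phi_n\|_{L^2(\Omega)}^{2} + [v^n_1]_s^{2} \leq C,$$
which is exactly the claimed estimate. The main conceptual obstacle is the identification of the correct splitting of $u^n_1$ (against $u^n_{-1}$ rather than $u^n_0$) that converts the cross bilinear form into the telescoping combination $[v^n_1]_s^{2}-[v^n_0]_s^{2}$ without leaving a remainder proportional to $\tau_n^{-1}$; this is precisely why the uniform $\tilde H^{2s}$-bound is placed on $u_0-\tau_n v^n_0=u^n_{-1}$ rather than merely on $u_0$. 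Everything else is a straightforward duality estimate plus Young's inequality.
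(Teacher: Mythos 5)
Your proof is correct and follows essentially the same route as the paper's: testing \eqref{eq:ELnoobstacle} at $i=1$ against the discrete acceleration $a^n_1$, using the decomposition $u^n_1=u^n_{-1}+\tau_n(v^n_1+v^n_0)$ to produce the telescoping term $[v^n_1]_s^2-[v^n_0]_s^2$ plus the cross term $[u^n_{-1},a^n_1]_s$, which is controlled by the uniform $\tilde{H}^{2s}$ bound on $u^n_{-1}$ exactly as in the paper (via Plancherel/Cauchy--Schwarz), with the potential term handled by Proposition \ref{unBW'} and the conclusion closed by Young's inequality. No substantive differences.
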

\begin{proof}
	By substituting the test function $\phi=\frac{u^{n}_{1}-2u^{n}_{0}+u^{n}_{-1}}{\tau^{2}_{n}}$ in the Euler's equation $\eqref{eq:ELnoobstacle}$ with $i=1$, we obtain that
	\begin{equation}\label{eq:improvedboundinitialstep}
		\begin{aligned}
			&\int_{\Omega} |\frac{u_1^n-2u_{0}^n+u_{-1}^n}{\tau_n^2}|^{2} \,dx + [u_1^n , \frac{u_1^n-2u_{0}^n+u_{-1}^n}{\tau_n^2}]_{s}+\int_{\Omega}\nabla_{u} W(u^{n}_{1}) \cdot \frac{u_1^n-2u_{0}^n+u_{-1}^n}{\tau_n^2} dx=0\\
			&\Longleftrightarrow \int_{\Omega}|a^{n}_{1}|^{2}dx+[\frac{u^{n}_{1}-u^{n}_{0}}{\tau_{n}}]^{2}_{s}-[\frac{u^{n}_{0}-u^{n}_{-1}}{\tau_{n}}]^{2}_{s}+[u^{n}_{-1},a^{n}_{1}]_{s}+\int_{\Omega} \nabla_{u}W(u^{n}_{1}).a^{n}_{1}dx=0\\
		\end{aligned}
	\end{equation}
	where $a^{n}_{1}=\frac{u_1^n-2u_{0}^n+u_{-1}^n}{\tau_n^2}$. It implies that
	\begin{equation}\label{improvedboundinitialstep2}
		\int_{\Omega}|a^{n}_{1}|^{2}dx+[\frac{u^{n}_{1}-u^{n}_{0}}{\tau_{n}}]^{2}_{s}\leq [\frac{u^{n}_{0}-u^{n}_{-1}}{\tau_{n}}]^{2}_{s}+||\nabla_{u}W(u_{1})||_{L^{2}(\Omega)}||a^{n}_{1}||_{L^{2}(\Omega)}+|[u^{n}_{-1},a^{n}_{1}]_{s}|
	\end{equation}
	On the other hand, we observe that $[\frac{u^{n}_{0}-u^{n}_{-1}}{\tau_{n}}]^{2}_{s}=[v^{n}_{0}]^{2}_{s}$ and 
	\begin{equation}
		\begin{aligned}
			|[u^{n}_{-1},a^{n}_{1}]_{s}|=&|\int_{\R^{d}}|\xi|^{2s}\mathcal{F}(u^{n}_{-1})(\xi) . \,\mathcal{F} (a^{n}_{1})(\xi)d\xi| \\
			\leq & \left(\int_{\R^{d}}|\xi|^{4s}|\mathcal{F}(u^{n}_{-1})(\xi)|^{2}d\xi \right)^{\frac{1}{2}} \left(\int_{\R^{d}}|\mathcal{F}(a^{n}_{1})(\xi)|^{2}d\xi\right)^{\frac{1}{2}} 
			\leq 
			[u^{n}_{-1}]_{2s}||a^{n}_{1}||_{L^{2}(\Omega)}.
		\end{aligned}
	\end{equation}
	From this observation combined with the hypothesis, Proposition $\ref{unBW'}$, and the inequality $\eqref{improvedboundinitialstep2}$, we can deduce that there exist constants $C_{1}, C_{2}$ independent of $n$ such that
	\begin{equation}\label{improvedboundinitialstep3}
		\int_{\Omega}|a^{n}_{1}|^{2}dx \leq \int_{\Omega}|a^{n}_{1}|^{2}+[\frac{u^{n}_{1}-u^{n}_{0}}{\tau_{n}}]^{2}_{s}\leq C_{1}||a^{n}_{1}||_{L^{2}(\Omega)}+C_{2}
	\end{equation}
	This gives rise to the uniform bound on $\int_{\Omega}|a^{n}_{1}|^{2}dx$, and it also follows that $[\frac{u^{n}_{1}-u^{n}_{0}}{\tau_{n}}]^{2}_{s}$ is uniformly bounded, which is the sought conclusion.
\end{proof}

We are now ready to prove Proposition $\ref{Conservative}$ and hence Theorem \ref{thm:main1}$(ii)$ :

\begin{proof}[{\bf Proof of Theorem \ref{thm:main1} $(ii)$}]

	For each $n$ fixed, let the Euler's equation at the step $i$ subtract the step $i-1$ divided by $\tau_{n}$, we obtain that
	\begin{equation}\label{eq:ELvelocity}
		\int_{\Omega} (\frac{v_i^n-2v_{i-1}^n+v_{i-2}^n}{\tau_n^2})\cdot \phi \,dx + [v_i^n , \phi]_{s}+\int_{\Omega}\frac{\nabla_{u} W(u^{n}_{i})-\nabla_{u} W(u^{n}_{i-1})}{\tau_{n}}  \cdot \phi dx = 0 \quad \text{for every } \phi \in \Hst(\Omega).
	\end{equation}
	for $i=2,\ldots,n$, and $v^{n}_{i}=\frac{u^{n}_{i}-u^{n}_{i-1}}{\tau_{n}}$, $i=0,\ldots,n$. Now, let $a^{n}_{i}=\frac{v^{n}_{i}-v^{n}_{i-1}}{\tau_{n}}$, and substituting the test function $\phi=v^{n}_{i-1}-v^{n}_{i}$ into the equation $\eqref{eq:ELvelocity}$. One has
	\begin{equation}\label{eq:ELvelocity1} 
		\int_{\Omega} (a^{n}_{i-1}-a^{n}_{i})\cdot a^{n}_{i} \,dx + [v_i^n , v^{n}_{i-1}-v^{n}_{i}]_{s}+\int_{\Omega}\frac{\nabla_{u} W(u^{n}_{i})-\nabla_{u} W(u^{n}_{i-1})}{\tau_{n}}  \cdot (v^{n}_{i-1}-v^{n}_{i}) dx = 0.
	\end{equation}
	with $i=2,\ldots,n$. From the equation $\eqref{eq:ELvelocity1}$ and due to the Lipshitz continuity condition of $\nabla W$, it follows that
	\begin{equation}\label{eq:ELvelocity2}
		\begin{aligned}
			&0\leq \int_{\Omega} (a^{n}_{i-1}-a^{n}_{i})\cdot a^{n}_{i} \,dx + [v_i^n , v^{n}_{i-1}-v^{n}_{i}]_{s}+\tau_{n}\int_{\Omega}K| v^{n}_{i}|  \cdot |a^{n}_{i}|dx\\
			&\leq \int_{\Omega} \frac{1}{2}(|a^{n}_{i-1}|^{2}-|a^{n}_{i}|^{2})\,dx + \frac{1}{2}([v_{i-1}^{n}]_{s}^{2}-[v_{i}^{n}]_{s}^{2})+\tau_{n}\int_{\Omega}K| v^{n}_{i}|  \cdot |a^{n}_{i}|dx\\
			&\leq \int_{\Omega} \frac{1}{2}(|a^{n}_{i-1}|^{2}-|a^{n}_{i}|^{2})\,dx + \frac{1}{2}([v_{i-1}^{n}]_{s}^{2}-[v_{i}^{n}]_{s}^{2})+\frac{1}{2}\tau_{n}\int_{\Omega}K(| v^{n}_{i}|^{2}  + |a^{n}_{i}|^{2})dx\\
		\end{aligned}
	\end{equation}
	Let's sum up the previous inequality for $i=2,\ldots,k$ one has
	\begin{equation}\label{eq:ELvelocity3}
		\begin{aligned}
			\int_{\Omega}|a^{n}_{k}|^{2}\,dx+[v_{k}^{n}]_{s}^{2}&\leq  \int_{\Omega}|a^{n}_{1}|^{2}\,dx+[v_{1}^{n}]_{s}^{2}+\tau_{n}K\left( \Sigma_{i=2}^{k} \int_{\Omega}(|v^{n}_{i}|^{2}  + |a^{n}_{i}|^{2})dx \right)\\
			&\leq  ||a^{n}_{1}||^{2}_{L^{2}}+[\frac{u_{1}-u_{0}}{\tau_{n}}]_{s}^{2}+\tau_{n}K \left( \Sigma_{i=2}^{k}\int_{\Omega}|a^{n}_{i}|^{2} \right)+K^{'}\tau_{n}(k-1).\\
			&\leq  C+\tau_{n}K \left( \Sigma_{i=2}^{k}\int_{\Omega}|a^{n}_{i}|^{2} \right)+K^{'}T.
		\end{aligned}
	\end{equation}
	here we have made use of the Lemma $\ref{uniformoninitialsteps}$, and the uniform bound in $L^{2}(\Omega)$ of $v^{n}_{i}$. From $\eqref{eq:ELvelocity3}$, we can deduce that
	\begin{equation}
		\int_{\Omega}|a^{n}_{k}|^{2}\,dx \leq C+\tau_{n}K \left( \Sigma_{i=2}^{k}\int_{\Omega}|a^{n}_{i}|^{2} \right)+K^{'}T
	\end{equation}
	and, then in view of Gronwall's inequality Proposition $\ref{GI}$, there exists a constant $C(T)$ such that
	\begin{equation}\label{regularityBound}
		\int_{\Omega}|a^{n}_{k}|^{2}\,dx \leq C(T)
	\end{equation}
	It also implies that $[v^{n}_{i}]^{2}_{s}$ is uniformly bounded i.e there exists a constant $C_{1}(T)$ such that
	\begin{equation}\label{regularityBound2}
		[v^{n}_{i}]^{2}_{s} \leq C_{1}(T).
	\end{equation}
	Due to uniform bounds $\eqref{regularityBound}, \eqref{regularityBound2}$, and by the analysis as in the proof of 
	Proposition $\ref{prop:convun}, \ref{prop:convunbar}$, one can show that $u\in W^{1,\infty}(0, T; \tilde{H}^{s}(\Omega))$, $u_{t}\in W^{1,\infty}(0, T; L^{2}(\Omega))$. Then, by substituting the test function $\phi=I_{[t_{1},\, t_{2}]}\times u_{t}$ in the weak equation of $u$, where $0<t_{1}<t_{2}<T$, and $I_{[t_{1},\, t_{2}]}$ is the indicator function on the time interval $[t_{1}, t_{2}]$, we obtain that
	\begin{equation}\label{uniqueness1}
	\begin{aligned}
	&\int_{t_{1}}^{t_{2}}<u_{tt}(t),u_{t}(t)>dt+\int_{t_{1}}^{t_{2}}[u(t),u_{t}(t)]_{s}dt+\int_{t_{1}}^{t_{2}}\int_{\Omega}\nabla_{u}W(u(t,x))u_{t}(t,x)dxdt=0\\
	&\Longleftrightarrow \int_{t_{1}}^{t_{2}} \frac{dE(u(t))}{dt}dt=0\\
	&\Longleftrightarrow E(u(t_{1}))=E(u(t_{2}))
	\end{aligned}
	\end{equation}
	i.e $E$ is constant inside the interval $(0, T)$, and we can extend the conservative property at endpoints by using the absolute continuity in time of $u$, $u_{t}$, and $W(u)$ in appropriate energy spaces.
\end{proof}

\begin{proof}[{\bf Proof of Theorem \ref{thm:main1}, $(iii)$}]
We are left to prove the uniqueness property: Indeed, let $v\in X$, and consider the following quantity
$$K(t)=\frac{1}{2}||u_{t}(t)-v_{t}(t)||^{2}_{L^{2}(\Omega)}+\frac{1}{2} [u(t)-v(t)]^{2}_{s}.$$
From Remark $\ref{absolutecontinuous}$, one has 
\begin{equation}\label{uniqueness1}
\begin{aligned}
&\int_{0}^{t}\frac{dK(t^{'})}{dt^{'}}dt^{'}=\int_{0}^{t}\int_{\Omega}<u_{t^{'}t^{'}}-v_{t^{'}t^{'}}, u_{t^{'}}-v_{t^{'}}>dt^{'}+\int_{0}^{t}[u(t^{'})-v(t^{'}), u_{t^{'}}(t^{'})-v_{t^{'}}(t^{'})]_{s}dt^{'}\\
&=-\int_{0}^{t}\left( \int_{\Omega}(\nabla_{u}W(u)-\nabla_{v}W(v))(u_{t^{'}}-v_{t^{'}})dx \right) dt^{'}
\end{aligned}
\end{equation}
here we have made use of the weak equation of $u$ with the test function $u_{t^{'}}\times I_{[0,\, t]}$ subtracting the one of $v$ with the test function $v_{t^{'}}\times I_{[0,\, t]}$, $I_{[0,\, t]}$ is the indicator function on the time interval $[0, t]$. From the equation $\eqref{uniqueness1}$ combined with Lipshitz continuity property of $\nabla W$ and Poincaré-type inequality in Proposition $\ref{poincareinequality}$, we obtain that
\begin{equation}\label{eq:uniquenessproperty2}
\begin{aligned}
K(t)&\leq K\int_{0}^{t}\int_{\Omega}|u-v||u_{t}-v_{t}|dx\\
&\leq \frac{1}{2}K\int_{0}^{t}\left( ||u(t)-v(t)||^{2}_{L^{2}(\Omega)}+||u_{t^{'}}(t^{'})-v_{t^{'}}(t^{'})||^{2}_{L^{2}(\Omega)}\right)dt^{'}\\
&\leq C_{s} \int_{0}^{t} \left( \frac{1}{2} [u(t^{'})-v(t^{'})]^{2}_{s}+ \frac{1}{2}||u_{t^{'}}(t^{'})-v_{t^{'}}(t^{'})||_{L^{2}(\Omega)}\right)dt^{'}\\
&\leq C_{s}\int_{0}^{t}K(t^{'})dt^{'}
\end{aligned}
\end{equation}
for some postive constants $C_{s}$, by Gronwall's inequality in Proposition $\ref{continuousGI}$, it implies that 
$$K(t)=0$$
for any $t\in [0, T]$ here we extend to the endpoint $t=T$ by using the absolute continuity in time of $u, v$. Then, it is easy to show that 
$$u(t)=v(t) \mbox{ in } \tilde{H}^{s}(\Omega)$$ for any $t\in [0, T]$.
\end{proof}

\subsection{Singular limits of nonlinear wave equations}
We turn our attention to the application of the results in the previous section to the singular limits of semilinear wave equation $\eqref{eq:semi1}$ related to topological defects (timelike minimal surfaces in Minkowski space). We consider the hyperbolic Ginzburg-Landau equation:
\begin{equation}\label{eq:hydefects}
\begin{system}
& \epsilon^{2}\left(\frac{\partial^{2} u_{\epsilon}}{\partial^{2} t}-\Delta u_{\epsilon}\right) +\nabla_{u} W(u_{\epsilon})= 0             	&\quad&\text{in } (0,T) \times \Omega, 	\\
& u_{\epsilon}(0,x) = u_{\epsilon}^{0}(x)                           &\quad&\text{in } \Omega                                                   , \\
& u_{\epsilon t}(0,x) = v^{0}_{\epsilon}(x)                         &\quad&\text{in } \Omega                                                   , \\
\end{system}
\end{equation}
where $\epsilon>0$ is a small parameter, $\Omega$ is a bounded domain in $\R^{d}$, for functions 
\begin{equation}
u_{\epsilon}: (0, T)\times \Omega\longrightarrow \R^{m},
\end{equation}
we will focus on the cases $m=1$, $m=2$, and $W$ is a non-convex balanced double well potential of class $C^{2}$. So as to apply the results in Section $\ref{sec:freesemiwaves}$, for simplicity we assume that the potential is given by
\begin{equation}
W(u)= \frac{(1-|u|^{2})^{2}}{1+|u|^{2}}.
\end{equation} 
Let us now introduce relevant quantities when dealing with topological defects: the first one is the gradient $\nabla u_{\epsilon}$ (for $m=1$), and the second is the Jacobian $2$-form, $Ju_{\epsilon}=du^{1}_{\epsilon}\wedge du^{2}_{\epsilon}$ (in the case $m=2$) defined on $(0, T)\times \Omega$. Both will be considered as distributions (concerning the distributional Jacobian, see for instance \cite{JerrardSoner, AlbertiBaldoOrlandi}) . We can prove that under natural bounds on initial energy they enjoy compactness properties and concentrate on codimension $m$ rectifiable sets in $(0, T)\times \Omega$ as $\epsilon \to 0^{+}$. We have
	\begin{prop}\label{singularlimits}
		Let $(u_{\epsilon})_{0<\epsilon<1}$ be a sequence of solutions of $\eqref{eq:hydefects}$ constructed by the approximating scheme in Section $\ref{sec:freesemiwaves}$ for each $0<\epsilon<1$ fixed such that $\frac{E(u_{\epsilon}(0))}{k_{\epsilon}} \leq C$ where $C$ is a constant independent of $\epsilon$, $k_{\epsilon}=\frac{1}{\epsilon}$ for $m=1$ and
		$k_{\epsilon}=|\log \epsilon |$ for $m=2$. Then, up to a subsequence $\epsilon_{n}\to 0$, 
		\begin{itemize}
			\item In case $m=1$, $$u_{\epsilon_{n}}\to u \mbox{ in } L^{1}((0, T)\times \Omega),$$
		where $u(t, x)\in \lbrace -1, 1 \rbrace$ for a.e. $(t,x)\in (0, T)\times \Omega$, and $u\in BV((0, T)\times \Omega)$.
		\item In case $m=2$,
		$$Ju_{\epsilon_{n}} \weak J \mbox{ in } [C^{0, 1}((0, T)\times \Omega)]^{*},$$ where $\frac{1}{\pi}J$ is a $d-1$  dimensional integral current in $(0, T)\times \Omega$.
		\end{itemize}
	\begin{proof}		
	In fact, for each $\epsilon$, from Theorem $\ref{thm:main1}$ the solution $u_{\epsilon}$ which is constructed by the approximating scheme in Section $\ref{sec:freesemiwaves}$  satisfies the energy inequality:
	\begin{equation}\label{defect}
	E(u_{\epsilon}(t))\leq E(u_{\epsilon}(0))
	\end{equation}
for any $t\in [0, T]$. Recall that $E(u_{\epsilon}(t)))=\frac{1}{2}||u_{\epsilon t}(t)||^{2}_{L^{2}(\Omega)}+\frac{1}{2}||\nabla u_{\epsilon}(t)||^{2}_{L^{2}(\Omega)}+\frac{1}{\epsilon^{2}}||W(u_{\epsilon}(t))||_{L^{1}}$.
By assumption we have
\begin{equation}\label{boundedness}
\frac{E(u_{\epsilon}(0))}{k_{\epsilon}} \leq C
\end{equation}
where $C$ is a constant independent of $\epsilon$, $k_{\epsilon}=\frac{1}{\epsilon}$ for $m=1$ and
$k_{\epsilon}=|\log \epsilon |$ for $m=2$.

Then,
\begin{itemize}
	\item In the case $m=1$, by integrating from $0$ to $T$  both side in $\eqref{defect}$ combined with $\eqref{boundedness}$ we obtain that
	\begin{equation}\label{Mordica-Mortola}
	\int_{(0, T)\times \Omega}\epsilon|  \nabla_{t,x}u_{\epsilon}(t,x) |^{2}dxdt+\int_{(0, T)\times \Omega} \frac{1}{\epsilon}W(u_{\epsilon}(t,x))dxdt \leq TC
	\end{equation}
	where $\nabla_{t,x}$ is the gradient in the space-time. In view of Modica-Mortola Theorem (see \cite{Modica}), it follows that
	there exists a function $u\in BV((0, T)\times \Omega; \lbrace-1, 1 \rbrace$) such that $u_{\epsilon}$ converges to $u$ in $L^{1}((0, T)\times \Omega)$ up to a subsequence. Moreover, the reduced boundary of the set $\Sigma^{1}=\lbrace (t,x) \in (0, T)\times \Omega \, | \, u(t,x)=1 \rbrace$ denoted by $\partial^{*}\Sigma^{1}$ is a $d-$dimensional rectifiable set in $(0, T)\times \Omega$ (for the definition of reduced boundary, see \cite{LeonSimon}). The set $\partial^{*}\Sigma^{1}$ is said to be the jump set of $u$  and it is a type of defects of the interfaces.
	\item In the complex case, following the results in $\cite{Jacobian}$, again from $\eqref{defect}$, up to a subsequence, we have that $Ju_{\epsilon} \weak J$ in $[C^{0, 1}((0, T)\times \Omega)]^{*}$, where $\frac{1}{\pi}J$ is a $d-1$  dimensional integral current in $(0, T)\times \Omega$, which concentrates on $d-1$ dimensional rectifiable set $\Sigma^{2}$ so called the vorticity set.
\end{itemize}
\end{proof}
\end{prop}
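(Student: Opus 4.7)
The plan is to leverage the energy inequality $E(u_\epsilon(t)) \leq E(u_\epsilon(0))$ established in Theorem~\ref{thm:main1}(i), applied to the rescaled semilinear wave equation \eqref{eq:hydefects}. After dividing \eqref{eq:hydefects} by $\epsilon^2$ it is exactly of the form \eqref{eq:freesemiwaves} with rescaled potential $W/\epsilon^2$, so the conserved energy reads
\[
E(u_\epsilon(t)) = \tfrac12||u_{\epsilon t}(t)||_{L^2(\Omega)}^2 + \tfrac12||\nabla u_\epsilon(t)||_{L^2(\Omega)}^2 + \tfrac{1}{\epsilon^2}||W(u_\epsilon(t))||_{L^1(\Omega)}.
\]
Integrating in time on $[0,T]$ and invoking the hypothesis $E(u_\epsilon(0))/k_\epsilon \leq C$ converts the pointwise-in-time bound into a space-time integral estimate with the correct Modica--Mortola/Ginzburg--Landau scaling for the respective value of $m$.

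For the scalar case $m=1$, I would multiply the inequality $E(u_\epsilon(t)) \leq C k_\epsilon = C/\epsilon$ by $\epsilon$ and integrate over $[0,T]$ to obtain
\[
\int_{(0,T)\times\Omega}\left[\epsilon\,|\nabla_{t,x} u_\epsilon|^2 + \tfrac{1}{\epsilon}W(u_\epsilon)\right] dx\,dt \leq T C,
\]
which is the classical Modica--Mortola functional on the $(d+1)$-dimensional space-time cylinder. Invoking the compactness part of the Modica--Mortola theorem \cite{Modica} one extracts a subsequence $u_{\epsilon_n}\to u$ in $L^1((0,T)\times\Omega)$ with $u\in BV((0,T)\times\Omega;\{-1,+1\})$; rectifiability of the jump set is then automatic from the structure theorem for sets of finite perimeter.

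For the vectorial case $m=2$, the scaling $k_\epsilon=|\log\epsilon|$ is the natural vortex regime for Ginzburg--Landau. The energy inequality and the hypothesis yield
\[
\int_0^T \frac{E(u_\epsilon(t))}{|\log\epsilon|}\,dt \leq T C,
\]
which is exactly the logarithmic energy bound required to apply the Jerrard--Soner Jacobian compactness theorem \cite{JerrardSoner} (see also \cite{AlbertiBaldoOrlandi}) on the space-time domain $(0,T)\times\Omega$. This gives weak-$*$ precompactness of $Ju_\epsilon$ in $[C^{0,1}((0,T)\times\Omega)]^*$, with any limit $J$ satisfying that $J/\pi$ is an integer-multiplicity $(d{-}1)$-dimensional rectifiable current.

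The main technical concern is verifying that the space-time bound really is of Modica--Mortola / Ginzburg--Landau type even though it originates from a hyperbolic equation. The point is that the conserved energy controls the \emph{Euclidean} space-time gradient $|\nabla_{t,x}u_\epsilon|^2=|u_{\epsilon t}|^2+|\nabla u_\epsilon|^2$ (a sum with positive signs), not the Minkowski form, so the elliptic-style compactness machinery of Modica and of Jerrard--Soner applies to the $(d+1)$-dimensional cylinder without modification. Once this observation is in place, both conclusions follow by direct citation of the referenced results, with the concentration set identified as the reduced boundary $\partial^*\Sigma^1$ in the scalar case and as the support of $J/\pi$ in the complex case.
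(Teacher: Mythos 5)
Your proposal is correct and follows essentially the same route as the paper: integrate the energy inequality from Theorem \ref{thm:main1}(i) in time, use the hypothesis $E(u_\epsilon(0))/k_\epsilon \leq C$ to obtain the Modica--Mortola bound on the space-time cylinder for $m=1$ and the logarithmic Ginzburg--Landau bound for $m=2$, and then invoke the compactness theorems of Modica and of Jerrard--Soner, respectively. Your added remark that the energy controls the Euclidean (not Minkowski) space-time gradient, so the elliptic compactness machinery applies on the $(d+1)$-dimensional cylinder, is exactly the point implicitly used in the paper.
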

To study the dynamics of jump and the vorticity sets one has to rely on the analysis of renormalized Lagrange density
\begin{equation}\label{lagrangedensity}
\mu_{\epsilon}=\frac{\ell(u_{\epsilon}(t,x))}{k_{\epsilon}}dxdt
\end{equation}
where $\ell(u_{\epsilon})=\frac{-|u_{\epsilon t}|^{2}+|\nabla u_{\epsilon}|^{2}}{2}+\frac{W(u_{\epsilon})}{\epsilon^{2}}$. In $\cite{Neu}$, Neu showed that certain solutions of $\eqref{eq:hydefects}$ in case $m=1$ give rise to interfaces sweeping out a timelike lorentzian minimal surface of codimension $1$. Further rigorous analysis were given in ($\cite{jerrard2011defects, SmailyJerrad, del2018interface}$), where solutions of $\eqref{eq:hydefects}$ having interfaces near a given timelike minimal surface were constructed. However, due to the presence of singularities, the validity of those results are only for short times. On the other hand, the limit behavior of hyperbolic Ginzburg-Landau equation $\eqref{eq:hydefects}$ as $\epsilon \to 0^{+}$ without restricting short times (i.e also after the onset of singularities) has been treated in $\cite{bellettini2010time}$ under conditional assumptions that the measure $\mu_{\epsilon}$ is shown to concentrate on a timelike lorentzian minimal submanifold of codimension $m$ within the varifold framework developed in $\cite{BNOLorentzian}$. This has been proved by adapting the parabolic approach  $\cite{AmbSoner}$ to the hyperbolic setting through the analysis of the stress-energy tensor. We conjecture that the assumptions in $\cite{bellettini2010time}$ could be relaxed for the solutions constructed by our approximating scheme through exploiting the minimizing properties of our approximate solutions.
\newpage
\section{The obstacle problem for fractional semilinear wave equations}\label{sec:semiobstacle}
In this section, following the pipeline of \cite{bonafini2019variational}, we move on to study the obstacle problem for the fractional semilinear wave equation. From now on we assume $m = 1$ and work with real valued functions. Given an open bounded domain $\Omega \subset \R^d$ with Lipschitz boundary and a function $g \in C^0(\bar{\Omega})$, $g<0$ on $\partial \Omega$, we are interested in the obstacle problem described by
\begin{equation}\label{eq:obstaclesemiwaves}
	\begin{system}
		& u_{tt} + (-\Delta)^s u+W'(u) \geq 0       &\quad&\text{in } (0,T) \times \Omega 						\\
		& u(t,\cdot) \geq g                         &\quad&\text{in } [0,T] \times \Omega                       \\
		& (u_{tt} + (-\Delta)^s u+W'(u))(u-g) = 0   &\quad&\text{in } (0,T) \times \Omega 						\\
		& u(t,x) = 0                                &\quad&\text{in } [0,T] \times (\R^d \setminus \Omega)      \\
		& u(0,x) = u_0(x)                           &\quad&\text{in } \Omega                                    \\
		& u_t(0,x) = v_0(x)                         &\quad&\text{in } \Omega                                    \\
	\end{system}
\end{equation}
with $u_0 \in \Hst(\Omega)$, $u_0 \geq g$ a.e. in $\Omega$, and $v_0 \in L^2(\Omega)$ (with $W$ as in Section $\ref{sec:freesemiwaves}$ with $m=1$).
We define a weak solution of $\eqref{eq:obstaclesemiwaves}$ as follows:
\begin{definition}\label{def:weakobst}
	Let $T > 0$. We say $u = u(t,x)$ is a weak solution of \eqref{eq:obstaclesemiwaves} in $(0,T)$ if
	\begin{enumerate}
		\item\label{cd1} $u \in L^\infty(0,T; \Hst(\Omega)) \cap W^{1,\infty}(0,T;L^2(\Omega))$ and $u(t,x) \geq g(x)$ for a.e. $(t,x) \in (0,T)\times \Omega$;
		\item\label{cd2} there exist weak left and right derivatives $u_t^{\pm}$ on $[0,T]$ (with appropriate modifications at endpoints);
		\item\label{cd3} for all $\phi \in W^{1,\infty}(0,T;L^2(\Omega)) \cap L^1(0,T;\Hst(\Omega))$ with $\phi \geq 0$, $\text{spt}\,\phi \subset [0,T)$, we have
		\[
		-\int_{0}^{T} \int_{\Omega} u_t\phi_t \, dxdt + \int_{0}^{T} [u, \phi]_{s} \, dt+\int_{0}^{T}\int_{\Omega}W'(u)\phi dxdt - \int_\Omega v_0\,\phi(0) \, dx \geq 0
		\]
		\item\label{cd4} the initial conditions are satisfied in the following sense
		\[
		u(0,\cdot) = u_0, \quad \int_\Omega (u_t^+(0)-v_0)(\phi-u_0) \, dx \geq 0 \quad \forall \phi \in \Hst(\Omega), \phi \geq g.
		\]
	\end{enumerate}
\end{definition}

This section is then dedicated to prove the existence of such a weak solution, combining results from the previous section and extensions of arguments of \cite[Section 4]{bonafini2019variational}.

\begin{theorem}\label{thm:main2}
	There exists a weak solution $u$ of the obstacle problem for fractional semilinear wave equation \eqref{eq:obstaclesemiwaves}, and $u$ satisfies
	\begin{equation}\label{ennergyestimate2}
		\frac12||u_t^\pm(t)||_{L^2(\Omega)}^2 +\frac12 [u(t)]_{s}^2+||W(u(t))||_{L^{1}(\Omega)}\leq \frac12 ||v_{0}||^{2}_{L^{2}(\Omega)}+\frac12[u_{0}]_{s}^2+||W(u_0)||_{L^{1}(\Omega)}
	\end{equation}
	for a.e. $t\in [0, T]$.
\end{theorem}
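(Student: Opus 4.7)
The plan is to adapt the time-discrete minimizing-movement scheme of Section \ref{sec:freesemiwaves} by restricting the minimization at each step to the closed convex cone
\[
K = \{ u \in \Hst(\Omega) \,:\, u \geq g \text{ a.e. in } \Omega \}.
\]
Setting $u^n_{-1} = u_0 - \tau_n v_0$, $u^n_0 = u_0 \in K$, define
\[
u_i^n \in \arg\min_{u \in K} J_i^n(u), \qquad i \geq 1,
\]
with $J_i^n$ as in \eqref{eq:scheme}. Existence of minimizers follows from the direct method, since $K$ is weakly closed. By convexity of $K$, the choice $v = u_i^n + \varepsilon(\phi - u_i^n) \in K$ for any $\phi \in K$ and $\varepsilon \in [0,1]$ is admissible, and differentiating at $\varepsilon = 0^+$ yields the Euler variational inequality
\[
\int_\Omega \frac{u_i^n - 2u_{i-1}^n + u_{i-2}^n}{\tau_n^2}(\phi - u_i^n)\,dx + [u_i^n, \phi - u_i^n]_s + \int_\Omega W'(u_i^n)(\phi - u_i^n)\,dx \geq 0.
\]

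My first step is to re-prove the energy estimate of Proposition \ref{prop:keyestimate} in this constrained setting: the crucial observation is that $\phi = u_{i-1}^n \in K$ is admissible by induction (each $u_i^n \geq g$), and the resulting test direction $u_{i-1}^n - u_i^n$ is precisely the one used in the free case, so the computations go through verbatim. Consequently the analogues of Propositions \ref{prop:convun}, \ref{prop:convunbar}, \ref{prop:Wprimeconvunbar} still produce a limit $u \in L^\infty(0,T;\Hst(\Omega)) \cap W^{1,\infty}(0,T;L^2(\Omega))$; the pointwise constraint $u(t,\cdot) \geq g$ passes to the limit from the strong $C^0([0,T];L^2(\Omega))$ convergence of $\bar{u}^n$, giving condition \ref{cd1}. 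To obtain condition \ref{cd3}, one restricts the inequality to test directions $\phi = u_i^n + \psi_i$ with $\psi_i \in \Hst(\Omega)$, $\psi_i \geq 0$ (automatically in $K$), multiplies by $\tau_n$ and sums. A discrete summation-by-parts in time transfers the second-order discrete acceleration onto the test function (producing the $-\int u_t \phi_t$ term in the limit and the boundary contribution $-\int_\Omega v_0 \phi(0) dx$ from $u^n_{-1}=u_0-\tau_n v_0$), and the convergences already established pass the remaining terms to the limit, preserving the inequality sign because all test functions are non-negative. The energy inequality \eqref{ennergyestimate2} follows by passing Proposition \ref{prop:keyestimate} to the limit via weak lower semicontinuity of each term.

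The delicate point is condition \ref{cd4}, namely the initial velocity inequality and the very existence of the one-sided weak derivatives $u_t^\pm$ in condition \ref{cd2}, which have no counterpart in the free case. For $u_t^+(0)$: the discrete velocities $v_1^n = (u_1^n - u_0)/\tau_n$ are uniformly bounded in $L^2(\Omega)$ by the key estimate, hence admit a weak cluster point, which will serve as $u_t^+(0)$. Writing the $i=1$ Euler inequality with $\phi \in K$ and rewriting the acceleration as $(v_1^n - v_0)/\tau_n$ gives
\[
\int_\Omega (v_1^n - v_0)(\phi - u_1^n)\,dx \geq -\tau_n\Big([u_1^n, \phi - u_1^n]_s + \int_\Omega W'(u_1^n)(\phi - u_1^n)\,dx\Big),
\]
whose right-hand side vanishes as $n \to \infty$ while $u_1^n \to u_0$ in $L^2$, yielding the stated inequality. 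The main obstacle is constructing $u_t^\pm$ as genuine one-sided derivatives at \emph{every} $t$ and not just weak $L^2$ traces; this is what requires the full strength of the variational structure, and I would follow the strategy of \cite[Section 4]{bonafini2019variational}, exploiting the fact that the obstacle-induced defect measure in the equation is non-negative, to obtain a one-sided $BV$-type control of $t \mapsto u_t(t)$ from which the one-sided limits exist and satisfy the announced inequality.
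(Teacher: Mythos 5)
Your scheme, energy estimate, convergence analysis, and treatment of condition \ref{cd3} coincide with the paper's: same constrained minimization over $K_g$, same variational inequality, same observation that $\phi=u_{i-1}^n$ is admissible so that Proposition \ref{prop:keyestimate} survives with ``$=$'' replaced by ``$\leq$'', and the same discrete summation by parts for condition \ref{cd3}. The energy inequality \eqref{ennergyestimate2} is also obtained exactly as you indicate.

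There is, however, a genuine gap in your handling of condition \ref{cd4}. You pass to the limit in the $i=1$ Euler inequality and declare a weak $L^2$ cluster point of $v_1^n=(u_1^n-u_0)/\tau_n$ to ``serve as $u_t^+(0)$''. But $u_t^+(0)$ is not a free name: by condition \ref{cd2} it must be the weak right limit $\lim_{t\to 0^+}u_t(t)$ of the velocity of the limit function $u$, and nothing in your argument identifies the cluster point of the single first-step discrete velocity with that right limit (the two limits $n\to\infty$ and $t\to 0^+$ do not obviously commute here). The paper circumvents this by summing the variational inequalities up to a \emph{fixed} grid time $T^*=m\tau_n$, passing to the limit $n\to\infty$ at that fixed $T^*$ (where $u_t^n(T^*)\rightharpoonup u_t(T^*)$ is available from Proposition \ref{prop:FBV}), and only then letting $T^*\to 0^+$, at which point the existence of the right limit $u_t^+(0)$ — guaranteed by the $BV$ structure — closes the argument; note this also requires controlling the extra term $\tau_n\sum_i\int_\Omega v_i^n v_{i-1}^n\,dx$, which is $O(T^*)$ by the key estimate. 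Relatedly, you defer the entire proof of condition \ref{cd2} (existence of $u_t^\pm$ at every $t$) to the reference; while you correctly name the right tool — a one-sided $BV$-in-time control of $t\mapsto\int_\Omega u_t(t)\phi\,dx$ for $\phi\geq 0$, coming from the sign of the obstacle reaction — this is precisely the new content of the obstacle case (Propositions \ref{prop:FBV} and \ref{Regu2}, via a telescoping bound on the negative parts of the increments $\int_\Omega(v_i^n-v_{i-1}^n)\phi\,dx$ and Helly's theorem), and it cannot be treated as a black box since condition \ref{cd4} depends on it.
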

\begin{remark}[Non-uniqueness and energy behaviour]
	\normalfont{
	The notion of weak solutions introduced in Definition \ref{def:weakobst}  can be seen as the minimal requirement we can make, i.e., to control ``upward'' variations. This leaves us with less control on the behaviour of ``downward'' moving regions, which is intended in order to allow sudden adjustments when hitting the obstacle. However, these coarse requirements lead at the same time to non-uniqueness of solutions. Generally speaking, uniqueness and in particular existence of energy preserving solutions for \eqref{eq:obstaclesemiwaves} is still an open problem, with only partial results in specific one dimensional configurations hinging on purely one dimensional arguments (see, e.g, \cite{Schatzman80} for a specific $1$d setting with local energy conservation at impacts). Within our framework a local (in space and time) energy conservation is expected whenever we are ``away'' from the obstacle (in the spirit of Proposition \ref{prop:nocontact} below), but deducing/imposing any additional condition at impact times would require the use of more technical local arguments that need further specific investigations.
	}
\end{remark}

\medskip
\subsection{Approximating scheme}
For $n\in \N$, set $\tau_n= T/n$ and define $t_i^n = i\tau_n$, $0\leq i \leq n$. Let $u_{-1}^n = u_0 - \tau_n v_0$, $u_0^n=u_0$ and define
\[
K_g := \{ u \in \Hst(\Omega) \,|\, u \geq g \text{ a.e. in } \Omega \}.
\]
For every $0 < i \leq n$, given $u^n_{i-2}$ and $u^n_{i-1}$, we define $u^{n}_{i}$ as
\[
u_i^n \in \arg \min_{u \in K_g} J_i^n(u),
\]
where $J_i^n$ is defined as in \eqref{eq:scheme}. Existence of $u^{n}_{i}$ can be obtained through the direct method of calculus of variations thanks to the convexity of $K_g$. In order to provide a variational characterization of each minimizer $u_i^n$, take $\phi \in K_g$ and consider the function $(1-\varepsilon)u_i^n + \varepsilon \phi$, which belongs to $K_g$ for any sufficiently small positive $\varepsilon$. Thus, by minimality of $u_i^n$, we have
\[
\frac{d}{d\varepsilon} J_i^n(u_i^n+\varepsilon (\phi-u_i^n)) |_{\varepsilon=0} \geq 0,
\]
which is equivalent to
\begin{equation}\label{eq:vardis}
	\int_{\Omega} \frac{u_i^n-2u_{i-1}^n+u_{i-2}^n}{\tau_n^2}(\phi-u_i^n)\,dx + [u_i^n , \phi-u_i^n]_{s}+\int_{\Omega}W'(u^{n}_{i})(\phi-u^{n}_{i})dx \geq 0 \quad \text{for all } \phi \in K_g.
\end{equation}
Moreover, because every $\phi \geq u_i^n$ is also an admissible test function, we obtain that
\begin{equation}\label{eq:vardis_simple}
	\int_{\Omega} \frac{u_i^n-2u_{i-1}^n+u_{i-2}^n}{\tau_n^2}\phi\,dx + [u_i^n , \phi]_{s}+\int_{\Omega}W'(u^{n}_{i})\phi dx \geq 0 \quad \text{for all } \phi \in \Hst(\Omega), \phi \geq 0.
\end{equation}
We define  $\bar{u}^{n}$ and $u^{n}$ to be the piecewise constant and the piecewise linear  interpolations in terms of $\{u_i^n\}_i$, just as in \eqref{eq:uhbar} and\eqref{eq:uh}; furthermore, let $v^n$ be the piecewise linear interpolant of velocities $v_i^n = (u_i^n-u_{i-1}^n)/\tau_n$, $0\leq i \leq n$. Taking into account \eqref{eq:vardis_simple}, integrating from $0$ to $T$, we obtain
\[
\int_{0}^T \int_\Omega \left( \frac{u^n_t(t) - u^n_t(t-\tau_n)}{\tau_n} \right) \phi(t) \,dxdt + \int_{0}^T [ \bar{u}^n(t), \phi(t) ]_{s} \, dt+\int_{0}^T \int_\Omega W'(\bar{u}^{n}(t))\phi (t)dxdt \geq 0
\]
for all $\phi \in L^1(0,T;\Hst(\Omega))$, $\phi(t,x) \geq 0$ for a.e. $(t,x) \in (0,T)\times \Omega$.

\medskip

\begin{remark}[Extension of the key estimate]\label{rmenergyestimate2}
By choosing the test function $\phi = u_{i-1}^n$ in \eqref{eq:vardis}, we have
	\[
	0 \leq \int_\Omega \frac{(u_{i}^n-2u_{i-1}^n+u_{i-2}^n)(u_{i-1}^n-u_{i}^n)}{\tau_n^2}\,dx + [u_i^n, u_{i-1}^n - u_i^n]_{s}+\int_{\Omega}W'(u^{n}_{i})(u^{n}_{i-1}-u^{n}_{i})dx
	\]
	and following the proof of Proposition $\ref{prop:keyestimate}$, we obtain the same energy estimate
	\[
	\frac12 \norm{ u_t^n(t) }_{ L^2(\Omega) }^{ 2 } + \frac12 [ \bar{u}^n(t) ]_{s}^{ 2 } + ||W(\bar{u}^n(t))||_{L^1(\Omega)} 
	\leq E(u(0)) + C\tau_n
	\]
	for all $t \in [0,T]$, with $C = C(E(u(0)), K, T)$ a constant independent of $n$.
\end{remark}

Given that the main energy estimate is still true, we can largely repeat the convergence proofs presented in the previous section.

\begin{prop}[Convergence of $u^n$, $\bar{u}^n$, $W(\bar{u}^{n})$, and $W'(\bar{u}^{n})$, obstacle case]\label{prop:convunobstacle}
	There exists a subsequence of steps $\tau_n \to 0$ and a function $u \in L^\infty(0,T;\Hst(\Omega)) \cap W^{1,\infty}(0,T;L^2(\Omega))$ such that
	\[
	\begin{aligned}
		&u^n \to u \text{ in } C^0([0,T];L^2(\Omega)), &\quad &u^n(t) \rightharpoonup u(t) \text{ in } \Hst(\Omega) \text{ for any } t \in [0,T], \\
		&u_t^n \rightharpoonup^* u_t \text{ in }  L^\infty(0,T;L^2(\Omega)), &\quad &\bar{u}^n \weakstar u \text{ in } L^\infty(0,T;\Hst(\Omega)).
	\end{aligned}
	\]
	Furthermore, $u(t,x)\geq g(x)$ for a.e. $(t,x) \in [0,T]\times\Omega$. Also,
	\[
	\begin{aligned}
		&W(\bar{u}^{n}) \to W(u) \text{ in } C^{0}([0, T]; L^{1}(\Omega)), \quad W'(\bar{u}^{n}) \rightharpoonup^* W'(u) \text{ in } L^\infty(0,T;H^{-s}(\Omega)).
	\end{aligned}
	\]	
		
\end{prop}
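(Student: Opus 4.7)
The plan is to largely recycle the convergence arguments used in the obstacle-free case. Indeed, as observed in Remark \ref{rmenergyestimate2}, the key energy estimate of Proposition \ref{prop:keyestimate} carries over unchanged to the obstacle setting upon testing \eqref{eq:vardis} with the admissible choice $\phi = u_{i-1}^n$. Since this estimate is the only input needed to derive the uniform bounds that drive Propositions \ref{prop:convun}, \ref{prop:convunbar} and \ref{prop:Wprimeconvunbar}, those compactness arguments will apply essentially verbatim here; the only genuinely new step is the preservation of the obstacle constraint in the limit.

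First I would exploit the uniform bounds $\|u^n_t(t)\|_{L^2(\Omega)} \leq C$ and $[\bar{u}^n(t)]_s \leq C$ together with an Ascoli--Arzel\`a argument (as in \cite[Proposition 6]{bonafini2019variational}) to extract a subsequence converging to a function $u \in L^\infty(0,T;\Hst(\Omega)) \cap W^{1,\infty}(0,T;L^2(\Omega))$, with $u^n \to u$ in $C^0([0,T];L^2(\Omega))$, $u^n(t) \weak u(t)$ in $\Hst(\Omega)$ pointwise in $t$, and $u^n_t \weakstar u_t$ in $L^\infty(0,T;L^2(\Omega))$. The estimate \eqref{convergenceuun} remains valid in the obstacle case and yields $\bar{u}^n \to u$ in $L^\infty(0,T;L^2(\Omega))$; combined with the uniform $L^\infty(0,T;\Hst(\Omega))$ bound, this gives $\bar{u}^n \weakstar u$ in $L^\infty(0,T;\Hst(\Omega))$.

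Next I would handle the obstacle constraint. By construction each minimizer $u^n_i$ lies in $K_g$, so $\bar{u}^n(t,x) \geq g(x)$ for a.e.\ $(t,x) \in (0,T) \times \Omega$ and every $n$. Passing to a further (not relabeled) subsequence along which $\bar{u}^n \to u$ pointwise a.e., which is available from the strong $L^2$-convergence just obtained, preserves the inequality and yields $u(t,x) \geq g(x)$ a.e. This is the place where I expect the only genuine novelty with respect to the free case to sit, and it turns out to be immediate thanks to the strong $L^2$-convergence.

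Finally, I would treat the potential terms as in Propositions \ref{prop:convunbar} and \ref{prop:Wprimeconvunbar}, observing that those arguments only relied on the Lipschitz continuity of $\nabla W$ and on the uniform $L^2$-bounds on $\bar{u}^n$, $u^n$, all of which are available here. The growth estimate \eqref{growthW} together with $\bar{u}^n \to u$ in $L^\infty(0,T;L^2(\Omega))$ gives $W(\bar{u}^n) \to W(u)$ in $C^0([0,T];L^1(\Omega))$; the same reasoning yields $W'(\bar{u}^n) \to W'(u)$ in $L^2((0,T)\times \Omega)$, which upgrades to the weak-$*$ convergence in $L^\infty(0,T;H^{-s}(\Omega))$ via the uniform $H^{-s}$-bound of Proposition \ref{unBW'}. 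Overall the main ``obstacle'' in this proof is not really one: it suffices to verify that each ingredient of the free-case compactness machinery continues to work when the Euler--Lagrange equation is replaced by the variational inequality \eqref{eq:vardis}, and to observe that the constraint $u \geq g$ survives pointwise a.e.\ passage to the limit.
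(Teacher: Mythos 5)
Your proposal is correct and follows essentially the same route as the paper, which simply invokes the proofs of Propositions \ref{prop:convun}, \ref{prop:convunbar} and \ref{prop:Wprimeconvunbar} (made available by the extension of the key estimate in Remark \ref{rmenergyestimate2}) and deduces $u \geq g$ from $u_i^n \in K_g$. You correctly restrict to the convergences that survive when the Euler--Lagrange equation becomes a variational inequality, and your pointwise-a.e.\ passage of the constraint to the limit is exactly the intended argument.
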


\begin{proof}
	See the proof of Propositions \ref{prop:convun}, \ref{prop:convunbar}  and \ref{prop:Wprimeconvunbar}. The fact that $u(t,x)\geq g(x)$ for a.e. $(t,x) \in [0,T]\times\Omega$ follows by the fact that $u_i^n \in K_g$ for all $n$ and $0\leq i\leq n$.
\end{proof}
Regarding the regularity of $u_{t}$, similar to what happens for the obstacle problem for the linear fractional wave equation, it is nearly impossible to expect $u_t$ to posses the same regularity as the obstacle-free case, i.e. $u_{tt} \in L^\infty(0,T;H^{-s}(\Omega))$, mainly due to dissipation of energy at the contact region with the obstacle. Nonetheless, extending the pipeline outlined in \cite[Section 4]{bonafini2019variational}, we are still able to provide some sort of higher regularity for $u_{t}$.

\begin{prop}\label{prop:FBV}
	Let $u$ be the function obtained in Proposition \ref{prop:convunobstacle} and, for any fixed $0\leq \phi\in \Hst(\Omega)$, let us define $F \colon [0,T] \to \R$ as follows
	\begin{equation}\label{eq:F}
		F(t) = \int_{\Omega}u_t(t)\phi\,dx.
	\end{equation}
	Then $F \in BV(0,T)$. Moreover, $u^n_t(t) \weak u_t(t)$ in $L^2(\Omega)$ for a.e. $t \in [0,T]$.
\end{prop}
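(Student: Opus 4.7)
The plan is to work at the discrete level by setting $F^n(t):=\int_\Omega u^n_t(t)\phi\,dx$, a piecewise constant function equal to $F^n_i:=\int_\Omega v^n_i\phi\,dx$ on $(t^n_{i-1},t^n_i]$, and to derive a uniform bound on $\|F^n\|_{BV(0,T)}$ from the one-sided variational inequality $\eqref{eq:vardis_simple}$. Because the admissible test functions in $\eqref{eq:vardis_simple}$ must be nonnegative, we can only hope for one-sided control on the increments $F^n_i-F^n_{i-1}$, and this is exactly what produces $BV$ (rather than Lipschitz) regularity at the limit. Once the BV bound is in hand, Helly's theorem will supply a pointwise a.e. limit, which is then identified with $F$ via the already-established weak-$*$ convergence of $u^n_t$.

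\textbf{One-sided discrete estimate.} Testing $\eqref{eq:vardis_simple}$ against the fixed nonnegative $\phi$ yields
\[
F^n_i-F^n_{i-1}\geq -\tau_n\Bigl([u^n_i,\phi]_s+\int_\Omega W'(u^n_i)\phi\,dx\Bigr).
\]
The extended energy estimate in Remark $\ref{rmenergyestimate2}$ bounds $[u^n_i]_s$ uniformly, Proposition $\ref{unBW'}$ (whose proof carries over to the obstacle setting with no change) bounds $\|W'(u^n_i)\|_{L^2(\Omega)}$ uniformly, and Cauchy--Schwarz then produces $(F^n_i-F^n_{i-1})^-\leq C\tau_n\|\phi\|_s$ with $C$ independent of $i,n$. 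Summing gives $\sum_i(F^n_i-F^n_{i-1})^-\leq CT\|\phi\|_s$; combined with the telescoping identity $\sum_i(F^n_i-F^n_{i-1})=F^n_n-F^n_0$, bounded thanks to the uniform $L^2$-bound on the $v^n_i$, this forces $\sum_i(F^n_i-F^n_{i-1})^+$ to be uniformly bounded as well, and hence $\|F^n\|_{BV(0,T)}\leq C'$ uniformly in $n$.

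\textbf{Passage to the limit and identification.} By Helly's selection theorem one extracts a (further) subsequence along which $F^n\to F^*$ pointwise a.e. in $[0,T]$ with $F^*\in BV(0,T)$. To identify $F^*=F$, I test against $\psi\in L^1(0,T)$: the weak-$*$ convergence $u^n_t\weakstar u_t$ in $L^\infty(0,T;L^2(\Omega))$ from Proposition $\ref{prop:convunobstacle}$ gives $\int_0^T F^n\psi\,dt\to\int_0^T F\psi\,dt$, while dominated convergence (using the uniform bound on $F^n$) yields convergence to $\int_0^T F^*\psi\,dt$; therefore $F=F^*$ a.e., proving $F\in BV(0,T)$.

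\textbf{Pointwise weak $L^2$ convergence.} For the second assertion, I fix a countable family $\{\phi_k\}\subset C^\infty_c(\Omega)$ dense in $L^2(\Omega)$ and decompose each $\phi_k$ as a difference of two nonnegative $C^\infty_c$ functions (for instance, $\phi_k=(\phi_k+M_k\psi_k)-M_k\psi_k$ with $\psi_k\in C^\infty_c(\Omega)$ dominating $\mathrm{spt}\,\phi_k$ and $M_k$ large). Applying the previous argument to each summand and diagonalizing produces a single subsequence and a full-measure set $S\subset[0,T]$ on which $\int_\Omega u^n_t(t)\phi_k\,dx\to\int_\Omega u_t(t)\phi_k\,dx$ for every $k$. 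The uniform $L^2(\Omega)$-bound on $u^n_t(t)$ coming from the energy estimate then upgrades this, by density of $\{\phi_k\}$ in $L^2(\Omega)$, to $u^n_t(t)\weak u_t(t)$ in $L^2(\Omega)$ for every $t\in S$. The only delicate step is this diagonal extraction coupling Helly's theorem to the countable family $\{\phi_k\}$; everything else is a routine consequence of the one-sided discrete bound.
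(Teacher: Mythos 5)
Your proof is correct and follows essentially the same route as the paper: the one-sided control on the increments $F^n_i-F^n_{i-1}$ coming from \eqref{eq:vardis_simple}, combined with the telescoping sum and the uniform $L^2$-bounds on the $v^n_i$, is exactly the paper's mechanism for the uniform $BV$ bound, and the Helly-plus-identification step is identical. The only difference is cosmetic: you spell out the final pointwise weak-$L^2$ convergence (decomposition into nonnegative test functions, countable dense family, diagonal extraction), whereas the paper delegates that step to a citation of \cite[Proposition 11]{bonafini2019variational}.
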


\begin{proof}
	Consider the functions $F^n \colon [0,T] \to \R$ defined as
	\begin{equation}\label{eq:Fn}
		F^n(t) = \int_{\Omega}^{} u_t^n(t)\phi \,dx.
	\end{equation}
	where $\phi$ is fixed in $\Hst(\Omega)$ with $\phi \geq 0$. The first observation is that because $u_t^n$ is bounded in $L^2(\Omega)$ uniformly in $n$ and $t$ (see Remark $\ref{rmenergyestimate2}$),
	$||F^n||_{L^1(0,T)}$ is uniformly bounded.  Moreover, $\{ F^{n} \}_{n}$ is also uniformly bounded in $BV(0, T)$: indeed, for every fixed $n > 0$ and $0\leq i\leq n$, from \eqref{eq:vardis_simple} taking into account Remark \ref{rmenergyestimate2} and Proposition \ref{unBW'}, we can deduce that
	\begin{equation}\label{eq:ELn1}
		\begin{aligned}
			&\Bigg\lvert \int_{\Omega}^{} (v_i^n - v_{i-1}^n)\phi\,dx \Bigg\rvert - \int_\Omega (v_i^n-v_{i-1}^n)\phi\,dx \leq 2\tau_n\left|[u_i^n,\phi]_{s}+\int_{\Omega}W'(u^{n}_{i})\phi dx\right| \\
			&\leq 2\tau_n\abs{[u_i^n,\phi]_{s}}+2\tau_{n}\left|\int_{\Omega}W'(u^{n}_{i})\phi dx\right| \leq 4\tau_n C ||\phi||_s
		\end{aligned} 
	\end{equation}
	Summing over $i = 1,\dots,n$, we obtain
	\[
	\begin{aligned}
	\sum_{i=1}^{n} &\Bigg\lvert \int_{\Omega} (v_i^n - v_{i-1}^n)\phi\,dx \Bigg\rvert \leq \int_\Omega v_{n}^n\phi \, dx - \int_\Omega v_0\phi\,dx + \sum_{i=1}^{n} 4\tau_n C ||\phi||_s \\ &\leq ||v_n^n||_{L^2(\Omega)} ||\phi||_{L^2(\Omega)} + ||v_0||_{L^2(\Omega)} ||\phi||_{L^2(\Omega)} + 4 TC ||\phi||_s \leq C||\phi||_{s},
	\end{aligned}
	\]
	with $C$ independent of $n$, where we make use of the uniform bound on $||v^{n}_{i}||_{L^{2}(\Omega)}$. Thus, by Helly's selection theorem, there exists a function $\bar{F}$ of bounded variation such that $F^n(t) \to \bar{F}(t)$ for every $t \in (0,T)$ as $n \to \infty$. Taking into account that $u_t^n \weakstar u_t$ in $L^\infty(0,T;L^2(\Omega))$, one can then prove that $F(t)=\bar{F}(t)$ and thus $u^{n}_{t}(t) \rightharpoonup u_{t}(t)$ for almost every $t\in (0, T)$ (we refer to \cite[Proposition 11]{bonafini2019variational} for details).
\end{proof}

From now on we can select $u_t$ to be $$u_t(t) =  \mbox{ weak-}L^2 \mbox{ limit of } u^n_t(t),$$
which is then defined for all $t \in [0,T]$.

\begin{prop}\label{Regu2}
	Fix $0\leq \phi\in \Hst(\Omega)$ and let $F$ be defined as in \eqref{eq:F}. Then, for any $t \in (0,T)$, we have
	\[
	\lim_{r\to t^-} F(r) \leq \lim_{s\to t^+}F(s).
	\]
\end{prop}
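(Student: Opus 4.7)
The strategy is to obtain, for every $r < t < s$ close to $t$, a bound of the form $F(s) - F(r) \geq -C(s-r)$, and then to send $r \to t^-$ and $s \to t^+$, using that $F\in BV(0,T)$ admits one-sided limits everywhere. The key one-sided information should come directly from the discrete variational inequality \eqref{eq:vardis_simple}, which is where the obstacle-induced sign asymmetry is encoded.

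First I would sum \eqref{eq:vardis_simple} over a range of indices. Multiplying by $\tau_n$ and summing from $i=j+1$ to $i=k$ (for $0\leq j<k\leq n$), the acceleration term telescopes into $v_k^n-v_j^n$ and, in terms of the piecewise-constant interpolant $\bar u^n$, one obtains
\begin{equation*}
\int_\Omega (v_k^n-v_j^n)\phi\,dx \;\geq\; -\int_{t_j^n}^{t_k^n}\!\left([\bar{u}^n(\sigma),\phi]_s+\int_\Omega W'(\bar{u}^n(\sigma))\phi\,dx\right)d\sigma.
\end{equation*}
For fixed $0<r<t<s<T$ I would then choose $j_n=\lceil r/\tau_n\rceil$ and $k_n=\lceil s/\tau_n\rceil$: since $u_t^n\equiv v_i^n$ on $(t_{i-1}^n,t_i^n]$, this yields $F^n(r)=\int_\Omega v_{j_n}^n\phi\,dx$ and $F^n(s)=\int_\Omega v_{k_n}^n\phi\,dx$, while $t_{j_n}^n\to r$ and $t_{k_n}^n\to s$.

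Passing to the limit $n\to\infty$ with $r,s$ fixed, the pointwise convergence $F^n\to F$ coming from the Helly step in the proof of Proposition \ref{prop:FBV} turns the left-hand side into $F(s)-F(r)$. On the right-hand side, the integrand is uniformly bounded in $\sigma$ and $n$ (via Remark \ref{rmenergyestimate2} and Proposition \ref{unBW'}) and converges pointwise in $\sigma$ to the corresponding quantity with $u$ by Proposition \ref{prop:convunobstacle}; dominated convergence, together with $t_{j_n}^n\to r$ and $t_{k_n}^n\to s$, then gives
\begin{equation*}
F(s)-F(r)\;\geq\;-\int_r^s\!\left([u(\sigma),\phi]_s+\int_\Omega W'(u(\sigma))\phi\,dx\right)d\sigma\;\geq\;-C(s-r),
\end{equation*}
with $C$ depending on $\phi$ and on the uniform bounds on $u$ in $L^\infty(0,T;\Hst(\Omega))$ and $W'(u)$ in $L^\infty(0,T;L^2(\Omega))$.

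Finally, letting $r\to t^-$ and $s\to t^+$ through admissible sequences, the right-hand side vanishes and the claim $F(t^+)-F(t^-)\geq 0$ follows. The main technical nuisance I anticipate is the bookkeeping relating $F^n$ to the family $\{v_i^n\}$ and the identification of the Helly-limit $\bar F$ with a BV representative of $F$ whose one-sided limits at $t$ are the ones we want; both issues are fully analogous to the corresponding steps in \cite[Proposition 11 and its sequel]{bonafini2019variational}, so no new idea should be required.
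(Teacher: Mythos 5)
Your proposal is correct and follows essentially the same route as the paper: sum the discrete variational inequality \eqref{eq:vardis_simple} (with the fixed nonnegative test function) over the time indices between $r$ and $s$, telescope the acceleration term into $F^n(s)-F^n(r)$, bound the remaining terms by $-C(s-r)$ using the uniform energy and $L^2$ bounds on $[u_i^n,\phi]_s$ and $W'(u_i^n)$, pass to the limit in $n$, and then invoke the one-sided BV limits. The only cosmetic difference is that you identify the limit of the right-hand side as an integral involving $u$ before bounding it, whereas the paper bounds it uniformly already at the discrete level; both yield the same estimate $F(s)-F(r)\geq -C(s-r)\|\phi\|_s$.
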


\begin{proof}
	Because $F \in BV(0,T)$, it has right and left limits at any point. Fix  $t \in (0,T)$ and let $0<r<t<s<T$. For each $n$, let us define $r_n$ and $s_n$ such that $r \in (t^n_{r_n-1},t^n_{r_n}]$ and $s \in (t^n_{s_n-1},t^n_{s_n}]$. From \eqref{eq:Fn}, proceeding as in \eqref{eq:ELn1}, we see that
	\[
	\begin{aligned}
	F^n(s)-F^n(r) &= \int_{\Omega} (u_t^n(s)-u_t^n(r))\phi\,dx = \int_{\Omega} (v^n_{s_n}-v^n_{r_n})\phi \,dx \\
	&= \sum_{i=r_n+1}^{s_n} \int_{\Omega} (v^n_{i}-v^n_{i-1})\phi \,dx \geq -2\tau_n \sum_{i=r_n+1}^{s_n} \abs{[u_i^n,\phi]_{s}}-2\tau_n \sum_{i=r_n+1}^{s_n}\int_{\Omega} \abs{W'(u^{n}_{i})\phi}dx \\
	&\geq -4C \tau_n (s_n-r_n) ||\phi||_{s}
	\end{aligned}
	\]
	for some positive constant $C$ independent of $n$.
	Moreover, $|s-r|\geq |t^n_{s_n-1}-t^n_{r_n}|=\tau_n(s_n-1-r_n)$, thus it implies that
	\[
	F^n(s)-F^n(r) \geq -2C|s-r|\cdot||\phi||_{s}-2C\tau_n||\phi||_{s}.
	\]
	By passing to the limit $n \to \infty$ we obtain that $F(s)-F(r) \geq -2C|s-r|\cdot||\phi||_{s}$,  this yields the conclusion.
\end{proof}

We are now ready to prove the existence result, namely Theorem $\ref{thm:main2}$.
\begin{proof}[Proof of Theorem \ref{thm:main2}]
	Let $u$ be the cluster point obtained in Proposition \ref{prop:convunobstacle}. It is easy to see that the first condition in  Definition \ref{def:weakobst} follows from  Proposition \ref{prop:convunobstacle}. From Proposition \ref{prop:FBV}, it implies that for any fix $\phi \geq 0,\,  \phi \in \tilde{H}^{s}$,  $F(t)=\int_{\Omega}u_{t}(t)\phi dx$ has the left and right limits for any $t\in [0, T]$ since $F$ is $BV(0, T)$, this in turn implies the second condition in Definition \ref{def:weakobst}.
	Let us verify the third and the fourth conditions in  Definition \ref{def:weakobst}.
	
	\emph{(3.)} For $n > 0$ and any test function $\phi \in W^{1,\infty}(0,T;L^2(\Omega)) \cap L^1(0,T;\Hst(\Omega))$, with $\phi \geq 0$, $\text{spt}\,\phi \subset [0,T)$, we recall that
	\begin{equation}\label{IME}
		\int_{0}^T \int_\Omega \left( \frac{u^n_t(t) - u^n_t(t-\tau_n)}{\tau_n} \right) \phi(t) \,dxdt + \int_{0}^T [ \bar{u}^n(t), \phi(t) ]_{s} \, dt+\int_{0}^{T}\int_{\Omega}W'(\bar{u}^{n})(t)\phi(t)dxdt \geq 0
	\end{equation}
	From  Proposition \ref{prop:convunobstacle}, we have
	\[
	\begin{aligned}
	\int_{0}^{T} [\bar{u}^n(t), \phi(t)]_{s} \, dt \to \int_{0}^{T} [u(t), \phi(t)]_{s} \, dt &\quad \text{as } n \to \infty,\\
	\int_{0}^{T}\int_{\Omega}W'(\bar{u}^{n}(t))\phi(t)dxdt   \to \int_{0}^{T}\int_{\Omega}W'(u(t))\phi(t)dxdt   &\quad \text{as } n \to \infty.
	\end{aligned}
	\]
	To deal with the first term of $\eqref{IME}$, we observe that
	\[
	\begin{aligned}
	\int_{0}^{T} \int_\Omega \frac{u^n_t(t)-u^n_t(t-\tau_n)}{\tau_n}\,\phi(t) \,dxdt 
	&= \int_{0}^{T-\tau_n}\int_\Omega u^n_t(t) \left( \frac{\phi(t)-\phi(t+\tau_n)}{\tau_n} \right) \, dxdt \\
	&- \int_{0}^{\tau_n} \int_\Omega \frac{v_0}{\tau_n}\,\phi(t) \, dxdt + \int_{T-\tau_n}^{T} \int_\Omega \frac{u^n_t(t)}{\tau_n}\,\phi(t) \, dxdt \\
	& \to \int_{0}^{T} \int_\Omega u_t(t)(-\phi_t(t)) \, dxdt - \int_\Omega v_0\,\phi(0) \, dx + 0 \quad \text{as } n \to \infty
	\end{aligned}
	\]
	and this completes the proof of condition ($\ref{cd3}$).
	
	\emph{(4.)} By the convergence of $u^n$ to $u$ in $C^0([0,T];L^2(\Omega))$ and $u^{n}(0)=u_{0}$, it implies that $u(0)=u_{0}$. So as to check the initial condition on velocity we assume, without loss of generality, that the sequence $u^n$ is constructed by taking $n \in \{2^m \,: m > 0\}$ (each successive time grid is obtained dividing the previous one). Fix $n$ and $\phi \in K_g$, let $T^* = m\tau_n$ for $0 \leq m \leq n$ (i.e. $T^*$ is a ``grid point'').
	We have
	\[
	\begin{aligned}
	& \int_0^{T^*} \int_{\Omega}^{} \frac{u_t^n(t)-u_t^n(t-\tau_n)}{\tau_n} (\phi - \bar{u}^n(t)) = \sum_{i=1}^m \int_{t_{i-1}^n}^{t_i^n} \int_\Omega \frac{u_i^n-2u_{i-1}^n+u_{i-2}^n}{\tau_n^2}(\phi-u_i^n) \\
	& = \int_{\Omega}^{} \sum_{i=1}^m \frac{u_i^n-2u_{i-1}^n+u_{i-2}^n}{\tau_n}(\phi-u_i^n) = \int_{\Omega}^{} \sum_{i=1}^m (v_i^n-v_{i-1}^n)(\phi-u_i^n) \\
	&= -\int_{\Omega}^{} v_0^n(\phi-u_1^n)\,dx + \int_\Omega v_m^n(\phi-u_m^n) \,dx + \tau_n \sum_{i=1}^{m-1} \int_\Omega v_i^nv_{i-1}^n\,dx \\
	&= -\int_{\Omega}^{} v_0(\phi-u_n(\tau_n))\,dx + \int_\Omega u_t^n(T^*)(\phi-u^n(T^*)) \,dx + \tau_n \sum_{i=1}^{m-1} \int_\Omega v_i^nv_{i-1}^n\,dx. \\
	\end{aligned}
	\]
	which combined with \eqref{eq:vardis} gives
	\[
	\begin{aligned}
	&-\int_{\Omega}^{} v_0(\phi-u_n(\tau_n))\,dx + \int_\Omega u_t^n(T^*)(\phi-u^n(T^*)) \,dx \geq
	-\tau_n \sum_{i=1}^{m-1} \int_\Omega v_i^nv_{i-1}^n\,dx \\
	&- \tau_n \sum_{i=1}^{m} [u_i^n, \phi-u_i^n]_{s}
	- \tau_n \sum_{i=1}^{m} \int_{\Omega}W'(u_i^n)( \phi-u_i^n)
	\geq -CT^* - CT^* ||\phi||_{s}
	\end{aligned}
	\]
	thanks to the boundedness of $W'(u_i^n)$ in $L^2(\Omega)$. Passing to the limit as $n \to \infty$, using $u^n(\tau_n) \to u(0)$ and  $u_t^n(T^*) \rightharpoonup u_t(T^*)$ (as noticed before we choose $u_{t}$ being the weak-$L^{2}$ limit of $u^{n}_{t}$), we obtain that
	\[
	\begin{aligned}
	&-\int_{\Omega}^{} v_0(\phi-u(0))\,dx + \int_\Omega u_t(T^*)(\phi-u(T^*)) \,dx \geq -CT^* - C||\phi||_{s} T^*.
	\end{aligned}
	\]
	Let $T^*$ tend to $0$ along a sequence of ``grid points'', we have that
	\[
	\int_\Omega (u_t^+(0)-v_0)(\phi-u(0)) \,dx \geq 0.
	\]
	To complete the proof, we observe that the energy estimate $\eqref{ennergyestimate2}$ is obtained by passing to the limit as $n \to \infty$ in
	\[
	\frac12 \norm{ u_t^n(t) }_{ L^2(\Omega) }^{ 2 } + \frac12 [ \bar{u}^n(t) ]_{s}^{ 2 } + ||W(\bar{u}^n(t))||_{L^1(\Omega)} 
	\leq E(u(0)) + C\tau_n
	\]
	for all $t \in [0,T]$, with $C$ a constant independent of $n$ (cf. Remark $\ref{rmenergyestimate2}$).
\end{proof}

We end this section by an observation that in the case $s=1$ the solutions become more regular whenever the approximation $u^{n}$ lies strictly above $g$.
\begin{prop}[Regions without contact]\label{prop:nocontact}
	Let $s = 1$ and, for $\delta > 0$, suppose there exists an open set $A_\delta \subset \Omega$ such that $u^n(t,x) > g(x) + \delta$ for a.e. $(t,x) \in (0,T)\times \Omega$ and for all $n > 0$. Then $u_{tt} \in L^\infty(0,T;H^{-1}(A_\delta))$ and $u$ satisfies \eqref{eq:eqweak} for all $\phi \in L^{1}(0,T;H^1_0(A_\delta))$.
\end{prop}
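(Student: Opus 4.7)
The plan is to exploit the strict separation $u^n > g + \delta$ on $A_\delta$ in order to upgrade the variational inequality \eqref{eq:vardis} to an Euler--Lagrange equality when tested against functions supported in $A_\delta$, and then to close the argument by mimicking the obstacle-free analysis of Section~\ref{sec:freesemiwaves} on this local piece. First I would transfer the hypothesis to the discrete values: since $u^n(\cdot,x)$ is continuous and piecewise linear in $t$, the a.e.\ strict separation is inherited by the nodal values, giving $u_i^n(x) \geq g(x) + \delta$ for a.e.\ $x \in A_\delta$ and every $0 \leq i \leq n$. For $\psi \in C^\infty_c(A_\delta)$ the function $u_i^n \pm \varepsilon \psi$ then lies in $K_g$ as soon as $|\varepsilon|\,\|\psi\|_\infty \leq \delta$, a bound on $\varepsilon$ that is \emph{independent of $n$ and $i$}. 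Plugging both signs into \eqref{eq:vardis} and letting $\varepsilon \to 0$ converts the variational inequality into the equality
\[
\int_\Omega \frac{u_i^n-2u_{i-1}^n+u_{i-2}^n}{\tau_n^2}\,\psi\,dx + [u_i^n,\psi]_1 + \int_\Omega W'(u_i^n)\,\psi\,dx = 0,
\]
which after integration on $[0,T]$ becomes
\[
\int_0^T \int_\Omega v_t^n\,\phi\,dx\,dt + \int_0^T [\bar u^n,\phi]_1\,dt + \int_0^T \int_\Omega W'(\bar u^n)\,\phi\,dx\,dt = 0
\]
for every $\phi \in L^1(0,T;C^\infty_c(A_\delta))$.

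The second step is to extract from this identity a uniform bound on $v_t^n$ in $L^\infty(0,T;H^{-1}(A_\delta))$. The uniform control of $[\bar u^n(t)]_1 = \|\nabla \bar u^n(t)\|_{L^2}$ from Remark~\ref{rmenergyestimate2}, together with the uniform $L^2$-bound on $W'(\bar u^n)$ from Proposition~\ref{unBW'}, yields $\|v_t^n(t)\|_{H^{-1}(A_\delta)} \leq C$ uniformly in $t$ and $n$, after using density of $C^\infty_c(A_\delta)$ in $H^1_0(A_\delta)$. Weak-$*$ compactness then extracts a subsequential limit in $L^\infty(0,T;H^{-1}(A_\delta))$, and since $v^n \to u_t$ in the sense of distributions (a consequence of the convergences recorded in Proposition~\ref{prop:convunobstacle}), this limit must coincide with $u_{tt}$, giving $u_{tt} \in L^\infty(0,T;H^{-1}(A_\delta))$.

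Finally, I would pass to the limit $n \to \infty$ in the integrated identity exactly as in the proof of Theorem~\ref{thm:main1}$(i)$, combining the weak-$*$ convergence $v_t^n \rightharpoonup^* u_{tt}$ in $L^\infty(0,T;H^{-1}(A_\delta))$ just obtained, the weak-$*$ convergence of $\bar u^n$ in $L^\infty(0,T;\tilde{H}^1(\Omega))$, and the convergence of $W'(\bar u^n)$, all provided by Proposition~\ref{prop:convunobstacle}. Density of $C^\infty_c(A_\delta)$ in $H^1_0(A_\delta)$ then extends the resulting identity from $\phi \in L^1(0,T;C^\infty_c(A_\delta))$ to all $\phi \in L^1(0,T;H^1_0(A_\delta))$. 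The most delicate point is the very first step: one must ensure that the a.e.\ strict separation of the continuous interpolant $u^n$ does translate into admissible bidirectional perturbations at the discrete level with a lower bound on the admissible $\varepsilon$ that is uniform in $n$, so that the $\varepsilon \to 0$ limit can be taken without losing information. Once this uniform separation is in hand, the rest is a purely local replay of the obstacle-free analysis.
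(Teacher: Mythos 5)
Your proposal is correct and follows essentially the same route as the paper: use the strict separation to turn the variational inequality into an Euler--Lagrange equality for test functions supported in $A_\delta$, deduce from it (together with the energy estimate and the uniform $L^2$-bound on $W'(\bar u^n)$) a uniform bound on $v^n_t$ in $L^\infty(0,T;H^{-1}(A_\delta))$, and pass to the limit as in the obstacle-free case. The point you flag as delicate is in fact harmless, since the Euler--Lagrange equality is derived for each fixed $n$ and $i$ separately, so no uniformity in $n$ of the admissible range of $\varepsilon$ is needed.
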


\begin{proof}
	Fix $n>0$ and $1 \leq i \leq n$. For every $\phi \in H^1_0(\Omega)$ with $\textup{spt}\,\phi \subset A_\delta$ we have $u^n_i + \varepsilon\phi \in K_g$ for $\varepsilon$ sufficiently small. In particular, inequality \eqref{eq:vardis_simple} turns into
	\begin{align}\label{eq:freelocalized}
	\int_{\Omega} \frac{u_i^n-2u_{i-1}^n+u_{i-2}^n}{\tau_n^2}\phi\,dx + \int_\Omega \nabla u_i^n \cdot  \nabla \phi\,dx + \int_\Omega W'(u_i^n)\phi \,dx= 0
	\end{align}
	The equality allows us to rescue the second part of the proof of Proposition \ref{prop:convun}: in the same notation, we can prove $v^n_t(t)$ to be bounded in $H^{-1}(A_\delta)$ uniformly in $t$ and $n$ by using the uniform bound on $||W'(u^{n}_{i})||_{L^{2}(\Omega)}$ provided by Proposition \ref{unBW'}. Thus, $v \in W^{1,\infty}(0,T;H^{-1}(A_\delta))$ and
	\[
	v^n \rightharpoonup^* v \text{ in } L^\infty(0,T;L^2(A_\delta)) \quad\text{and}\quad v^n \rightharpoonup^* v \text{ in } W^{1,\infty}(0,T;H^{-1}(A_\delta)).
	\]
	A localization on $A_\delta$ proves that $v_t = u_{tt}$ in $A_\delta$ so that
	\[
	u_{tt} \in L^\infty(0,T;H^{-1}({A_\delta})).
	\]
	To get \eqref{eq:eqweak} we pass to the limit in \eqref{eq:freelocalized} as we have done in the proof of Theorem $\ref{thm:main1}$.
\end{proof}

\section{A numerical example}
\label{sec:numerics}

We present in this section a simple example implementing the scheme of Section \ref{sec:freesemiwaves} for a two dimensional radially symmetric problem related to moving interfaces in the relativistic setting.
We consider equation \eqref{eq:freesemiwaves} with potential
\[
W(u) = \frac{(1-u^2)^2}{1+u^2}
\]
and a radially symmetric initial datum $u_0$ having a sharp transition at a given radius $R_0 > 0$ (the function transitioning form $1$ inside to $-1$ outside). The initial velocity is assumed to be zero and the computational domain $\Omega = B(0,\bar{R})$ for $\bar{R}>R_0$. From results in \cite{jerrard2011defects}, the solution $u(t,\cdot)$ is expected to keep the initial structure of a radially symmetric function with a sharp transition region, with said transition region evolving inwards: for $0 \leq t < R_0\pi/2$ the solution $u(t,\cdot)$ will display its transition region along the circle of radius
\[
R(t) = R_0\cos\left(\frac{t}{R_0}\right).
\]
Thus, we incorporate the radial symmetry in the minimization of \eqref{eq:scheme} and we translate the problem into a $1$d optimization over $\bar{\Omega} = [0, \bar{R}]$ and assume Dirichlet boundary conditions $\pm1$ at $0, \bar{R}$. We employ the same discretization used in \cite{bonafini2019variational}, based on classical piecewise linear finite elements. The finite dimensional optimization problem is then solved via a projected gradient descent method combined with a dynamic adaptation of the descent step size. We display the results in Figure \ref{fig:1d_exe}: we can see how the solution evolves the transition region in time and how the position of the transition follows closely the expected radius.

\begin{figure}[tbh]
	\centering
	\begin{tabular}{cc}
		\includegraphics[width=0.4\linewidth]{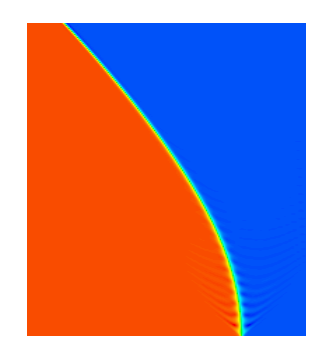}
		\includegraphics[width=0.6\linewidth]{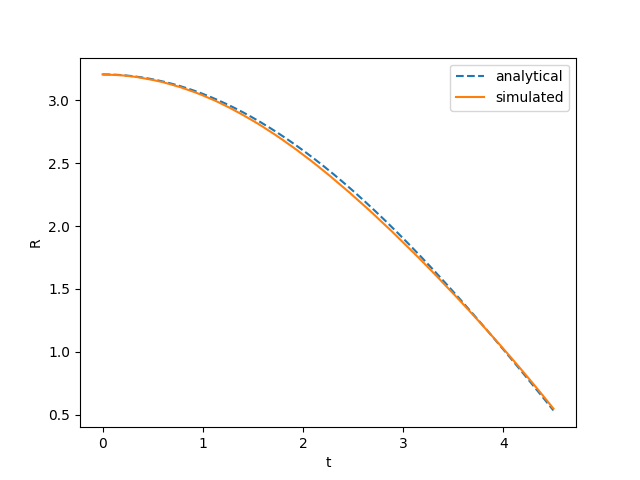}
	\end{tabular}
	\caption{Left: space-time orthogonal view of the solution, red being $+1$ and blue being $-1$. Right: time evolution of the transition region (analytical vs. simulated).}
	\label{fig:1d_exe}
\end{figure}

\newpage
\section{Appendix}
We recall the proof for the discrete Gronwall's inequality as used in the proof of Proposition $\ref{prop:keyestimate}$.
\begin{prop}(Discrete Gronwall inequality)\label{GI}
	Let $\{y_{n}\}_{n=0}^N$ be a sequence of non-negative numbers, and assume there exist two positive constants $A, B > 0$ such that
	$$
	y_{0}=0 \quad \text{and} \quad y_{n}\leq A+\frac{B}{N}\sum_{j=0}^{n-1}y_{j}\quad \text{ for all } n = 1, \dots, N.
	$$
	Then,
	\[
	y_{i}\leq A\exp(B) \quad \text{ for all } i = 1,\dots, N.
	\]
\end{prop}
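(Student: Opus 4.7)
The plan is to prove by induction on $n$ that the sharper bound
\[
y_n \leq A\left(1+\frac{B}{N}\right)^{n-1} \quad \text{for all } n = 1, \dots, N
\]
holds, and then conclude by the elementary estimate $(1+B/N)^{N-1} \leq (1+B/N)^N \leq \exp(B)$.

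For the base case $n=1$, the hypothesis immediately gives $y_1 \leq A + (B/N) y_0 = A$ since $y_0 = 0$, matching $A(1+B/N)^0$. For the inductive step, assuming the bound holds for $j = 1, \dots, n$, I would plug this into the recursive inequality:
\[
y_{n+1} \leq A + \frac{B}{N}\sum_{j=0}^{n} y_j = A + \frac{B}{N}\sum_{j=1}^{n} y_j \leq A + \frac{B}{N}\sum_{j=1}^{n} A\left(1+\frac{B}{N}\right)^{j-1},
\]
where I used $y_0 = 0$. Summing the geometric series then gives
\[
y_{n+1} \leq A + A\left[\left(1+\frac{B}{N}\right)^n - 1\right] = A\left(1+\frac{B}{N}\right)^n,
\]
which closes the induction. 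Since $(1 + x/N)^N$ is increasing in $N$ with limit $e^x$, one has $(1+B/N)^{n-1} \leq \exp(B)$ for all $n \leq N$, giving the stated conclusion $y_i \leq A \exp(B)$.

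There is no real obstacle here; the only thing to be careful about is the index bookkeeping (the offset $n-1$ in the exponent, which is what makes the base case work thanks to $y_0 = 0$) and correctly identifying the geometric sum. An alternative route would be to introduce the partial sums $S_n = \sum_{j=0}^{n-1} y_j$, observe that $S_{n+1} \leq (1 + B/N) S_n + A$, and iterate, but the direct induction above is the cleanest presentation.
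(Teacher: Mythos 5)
Your proof is correct and follows essentially the same route as the paper: an induction establishing a geometric bound $A(1+B/N)^{\,\cdot}$ followed by the elementary comparison with $\exp(B)$. The only (immaterial) difference is that your exponent is $n-1$ rather than the paper's $n$, since you track the geometric sum exactly where the paper discards a small negative term.
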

\begin{proof}
	We first prove by induction that
	\begin{equation}\label{GR1}
		y_{i}\leq A\left(1+\frac{B}{N}\right)^{i}
	\end{equation}
	for all $0\leq i \leq N$. The case $i=0$ is obvious. Now suppose that $\eqref{GR1}$ holds from $0$ to $k$, then
	\[
	\begin{aligned}
	y_{k+1} &\leq A+\frac{B}{N}\sum_{j=0}^{k}y_{j}\\
	&\leq A+\frac{B}{N}\left(A\left(1+\frac{B}{N}\right)+A\left(1+\frac{B}{N}\right)^{2}+\ldots+A\left(1+\frac{B}{N}\right)^{k}\right)\\
	&=A+\frac{AB}{N}\left(\frac{\left(1+\frac{B}{N}\right)^{k}-1}{\frac{B}{N}}\right)\left(1+\frac{B}{N}\right)=A+A\left(\left(1+\frac{B}{N}\right)^{k}-1\right)\left(1+\frac{B}{N}\right)\\
	&=A\left(\left(1+\frac{B}{N}\right)^{k+1}-\frac{B}{N}\right)\leq A\left(1+\frac{B}{N}\right)^{k+1}.
	\end{aligned}
	\]
	This yields $\eqref{GR1}$, which in turn gives
	\[
	\begin{aligned}
	y_{i} &\leq A\left(1+\frac{B}{N}\right)^{i} \leq A\left[\exp\left(\frac{B}{N}\right)\right]^i = A\exp\left(\frac{i}{N}B\right)\leq A\exp\left(B\right)
	\end{aligned}
	\]
	for all $i = 0,\dots, N$.
\end{proof}
We also provide here the continuous version of Gronwall's inequality:
\begin{prop}\label{continuousGI}
Let $g: [0, 1]\longrightarrow \R$ be a non-negative continuous function, and it satisfies the following inequality:
$$g(t)\leq C\int_{0}^{t}g(s)ds$$
for any $t \in [0, 1]$, for some positive constants $C$. Then, $g(t)=0$ for any $t\in [0, 1]$.
\begin{proof}
Let $m(t)=e^{-Ct}\int_{0}^{t}g(s)ds$. We observe that 
$\frac{dm(t)}{dt} \leq 0$ for any $t\in (0, 1)$, and $m(0)=0$. Therefore, we have
$$m(t)=0$$
for any $t\in (0, 1)$, this implies that $g(t)=0$ for each $t\in (0, 1)$, and we extend to the endpoints by continuity of $g$.
\end{proof}
\end{prop}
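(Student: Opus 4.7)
The plan is to reduce the integral inequality to a differential inequality for a cleverly chosen auxiliary function, which is the standard trick for Gronwall-type arguments. First I would introduce $G(t) := \int_0^t g(s)\,ds$, which is well-defined and $C^1$ on $[0,1]$ by continuity of $g$, with $G(0)=0$ and $G'(t) = g(t) \geq 0$. The hypothesis then rewrites as
\[
G'(t) \leq C\,G(t) \qquad \text{for all } t \in [0,1].
\]

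Next I would absorb the right-hand side into a derivative by the integrating factor method: consider $m(t) := e^{-Ct} G(t)$. A direct differentiation gives
\[
m'(t) = e^{-Ct}\bigl(G'(t) - C\,G(t)\bigr) \leq 0,
\]
so $m$ is non-increasing on $[0,1]$. Since $m(0) = G(0) = 0$, we conclude $m(t) \leq 0$ for all $t \in [0,1]$. On the other hand $e^{-Ct} > 0$ and $G(t) \geq 0$ force $m(t) \geq 0$, so $m \equiv 0$, and therefore $G \equiv 0$ on $[0,1]$.

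Finally, plugging $G \equiv 0$ back into the original hypothesis yields $g(t) \leq C \cdot G(t) = 0$, while $g$ is non-negative by assumption, so $g \equiv 0$ on $[0,1]$. There is no genuine obstacle in this argument; the only thing to verify carefully is that $g$ being merely continuous (not $C^1$) suffices, which it does because $G$ is automatically $C^1$ by the fundamental theorem of calculus, and all manipulations are performed on $G$ rather than on $g$ directly. The endpoint $t=1$ is covered by the same computation since the estimates hold on the closed interval.
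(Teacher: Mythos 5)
Your proposal is correct and follows exactly the paper's argument: the same integrating factor $m(t)=e^{-Ct}\int_0^t g(s)\,ds$, the observation that $m'\leq 0$ with $m(0)=0$, and the sign of $m$ forcing $m\equiv 0$ and hence $g\equiv 0$. Your write-up is in fact slightly more explicit than the paper's in spelling out why $m\geq 0$ and in handling the endpoint, but the route is identical.
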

The last inequality presented here which is the Poincaré-type inequality, is used in the proof of uniqueness.
\begin{prop}\label{poincareinequality}
Let $u\in \tilde{H}^{s}(\Omega)$. Then, there exists a postive constant $C_{s}$ such that 
$$||u||_{L^{2}(\Omega)}\leq C_{s} [u]_{s}.$$
\begin{proof}
Let $u\in \tilde{H}^{s}(\Omega)$, by Heisenberg-Pauli-Weyl inequality, one has
\begin{equation}\label{eq:uniquenessproperty}
\begin{aligned}
||u||^{4}_{L^{2}(\Omega)}&\leq C_{s}\left( \int_{\R^{d}}|x|^{2s}|u(x)|^{2}dx \right)\left( \int_{\R^{d}}|\eta|^{2s}|\mathcal{F}(u)(\eta)|^{2}d\eta \right)\\
&\leq D_{s}||u||^{2}_{L^{2}(\Omega)}[u]^{2}_{s}.
\end{aligned}
\end{equation}
for some constants $D_{s}$, we have used that $u$ is equal to $0$ outside the bounded domain $\Omega$. Thus, we obtain that
$$||u||_{L^{2}(\Omega)}\leq D_{s} [u]_{s},$$
which is the conclusion.
\end{proof}
\end{prop}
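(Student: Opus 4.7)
The plan is to exploit the fact that $u$ vanishes a.e.\ outside the bounded set $\Omega$, which means the spatial moments of $u$ are automatically controlled by $\|u\|_{L^2(\Omega)}$ up to a geometric constant depending on $\Omega$. I would combine this observation with a standard frequency–space uncertainty principle, namely the fractional Heisenberg--Pauli--Weyl inequality
\[
\|f\|_{L^2(\R^d)}^{2} \;\leq\; C_{s,d} \, \bigl\| |x|^{s} f(x) \bigr\|_{L^2(\R^d)} \, \bigl\| |\xi|^{s} \mathcal{F}f(\xi) \bigr\|_{L^2(\R^d)},
\]
valid for every $f \in L^2(\R^d)$ with both factors on the right-hand side finite. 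This is the natural fractional analogue of the classical uncertainty principle and can be obtained either by complex interpolation between the trivial bound at $s=0$ and the classical $s=1$ uncertainty inequality, or by a direct argument based on the commutator identity for $|x|^s$ and $(-\Delta)^{s/2}$ on the Schwartz class, followed by density.

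Having this tool at hand, the proof is short. First I would fix $R>0$ such that $\Omega \subset B(0,R)$, which is possible since $\Omega$ is bounded. Since $u \in \tilde{H}^{s}(\Omega)$ satisfies $u \equiv 0$ a.e.\ on $\R^d \setminus \Omega$, I can estimate
\[
\bigl\| |x|^{s} u \bigr\|_{L^2(\R^d)}^{2} \;=\; \int_{\Omega} |x|^{2s}\,|u(x)|^{2}\,dx \;\leq\; R^{2s} \,\|u\|_{L^2(\Omega)}^{2}.
\]
On the Fourier side, Plancherel's identity combined with the definition of the fractional Laplacian via Fourier multipliers gives
\[
\bigl\| |\xi|^{s} \mathcal{F}u \bigr\|_{L^2(\R^d)} \;=\; \bigl\| (-\Delta)^{s/2} u \bigr\|_{L^2(\R^d)} \;=\; [u]_{s}.
\]

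Plugging these two bounds into the Heisenberg--Pauli--Weyl inequality applied to $f = u$ yields
\[
\|u\|_{L^2(\Omega)}^{2} \;\leq\; C_{s,d}\, R^{s}\, \|u\|_{L^2(\Omega)}\, [u]_{s}.
\]
If $\|u\|_{L^2(\Omega)} = 0$ the claimed inequality is trivial; otherwise we divide both sides by $\|u\|_{L^2(\Omega)}$ and obtain the conclusion with $C_{s} := C_{s,d}\, R^{s}$, which depends only on $s$, $d$ and the diameter of $\Omega$.

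The only nontrivial ingredient is the fractional uncertainty inequality itself; once it is accepted, the rest is just the observation that compact support turns a spatial moment into a plain $L^2$-norm. An alternative approach would be to use density of $C^{\infty}_{c}(\Omega)$ in $\tilde{H}^s(\Omega)$ together with the continuous embedding $\tilde{H}^s(\Omega) \hookrightarrow L^{2^{\ast}_{s}}(\R^d)$ for $s < d/2$ followed by Hölder on the bounded support, but this would require a dimensional restriction and a case distinction for $s \geq d/2$, whereas the uncertainty-principle route handles every $s > 0$ uniformly.
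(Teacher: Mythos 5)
Your proof is correct and follows essentially the same route as the paper: both apply the fractional Heisenberg--Pauli--Weyl uncertainty inequality, use the vanishing of $u$ outside the bounded set $\Omega$ to bound the spatial moment $\int_{\R^d}|x|^{2s}|u|^2\,dx$ by $R^{2s}\|u\|_{L^2(\Omega)}^2$, identify the frequency factor with $[u]_s$ via Plancherel, and divide by $\|u\|_{L^2(\Omega)}$. Your write-up is in fact slightly more careful than the paper's, since you make the constant's dependence on $\operatorname{diam}(\Omega)$ explicit and handle the degenerate case $\|u\|_{L^2(\Omega)}=0$.
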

\nocite{*}

\end{document}